\tikzset{
	vertex/.style = {
		circle,
		fill            = black,
		outer sep = 2pt,
		inner sep = 1pt,
	}
}
\newcommand*\RR{\mathbb{R}}
\newcommand*\Z{\mathbb{Z}}
\newcommand*\ee{\mathrm{e}}
\newcommand*\ii{\mathrm{i}}
\newcommand*\imcone{A(\MM_+)}
\newcommand {\MM} { {\mathcal{M}} }
\newcommand*\loss{\ell}
\renewcommand{\geq}{\geqslant}
\renewcommand{\leq}{\leqslant}
\renewcommand{\geq}{\geqslant}
\renewcommand{\leq}{\leqslant}
\DeclarePairedDelimiter\ceil{\lceil}{\rceil}
\newcommand {\Chi} {{\bf \raise 2pt \hbox{$\chi$}} }
\newcommand*\ndet{m}
\newcommand{\scl}[2]{\pairing*{#1}{#2}}
\newcommand {\R} { {\mathbb R} }
\newcommand*\dd{\mathrm{d}}
\newcommand{\supp}{\operatorname{supp}}
\newcommand{\beq}{\begin{equation}}
\newcommand{\eeq}{\end{equation}}
\newcommand{\bea} {\begin{array}{rl}}
\newcommand{\eea} {\end{array}}
\newcommand{\bepa}{\left\{ \begin{array}{l}}
\newcommand{\eepa} {\end{array}\right.}
\newcommand{\bmu}{\begin{multline}}
\newcommand{\emu}{\end{multline}}
\DeclareMathOperator*{\argmin}{arg\,min}
\title{Linear inverse problems with nonnegativity constraints: singularity of optimisers}
\date{\today}
\author{Camille Pouchol}
\address{Universit\'e Paris Cit\'e, FP2M, CNRS FR 2036, MAP5 UMR 8145, F-75006 Paris, France.}
\email{camille.pouchol@u-paris.fr}
\author{Olivier Verdier}
\address{Department of Computing, Electrical Engineering and Mathematical Sciences, Western Norway University of Applied Sciences, Bergen, Norway.}
\email{olivier.verdier@hvl.no}
\begin{document}

\newcounter{assum}
\begin{abstract}
  We look at continuum solutions in optimisation problems associated to linear inverse problems $y = Ax$ with non-negativity constraint $x \geq 0$.
  We focus on the case where the noise model leads to maximum likelihood estimation through general  divergences, which cover a wide range of common noise statistics such as Gaussian and Poisson.
  Considering~$x$ as a Radon measure over the domain on which the reconstruction is taking place,  we show a general singularity result.
  In the high noise regime corresponding to $y \notin \setc{Ax}{x \geq 0}$ and under a key assumption on the divergence as well as on the operator $A$, any optimiser has a singular part with respect to the Lebesgue measure.
  We hence provide an explanation as to why any possible algorithm successfully solving the optimisation problem will lead to undesirably spiky-looking images when the image resolution gets finer, a phenomenon well documented in the literature.
  We illustrate these results with several numerical examples inspired by medical imaging.
\end{abstract}

\maketitle
\section{Introduction}
We consider linear inverse problems of the type $Ax = y$, where \(A\) is a linear operator, and \(y\) lies in a suitable linear space.
These problems are often endowed with constraints stemming from the model at hand.
One of the most common such constraints is nonnegativity of the unknown, {i.e.}, $x \geq 0$.
This happens in various applications, in particular in medical imaging, where \(x\) is the \emph{activity}, which is nonnegative for physical reasons.
One important example is that of Positron Emission Tomography (PET)%
~\cite{Shepp1982}.
In this setting, the unknown \(x\) lies in $\R^r$, the data $y$ lies in  $\R^\ndet$, and the operator $A$ lies in $\R^{\ndet \times r}$, where $\ndet$ is the number of data points, $r$ is the number of voxels. 
Deconvolution problems often also incorporate such constraints~\cite{Henrot2012}.

Depending on the noise model, the corresponding (negative) log-likelihood problem is typically of the form
\[ \min_{x \geq 0} \quad D(y,Ax),\]
where $D$ is some divergence functional.
If the noise model is Gaussian, for instance, then $D$ is simply the Euclidean distance, whereas if the noise model is Poisson, $D$ is the Kullback--Leibler divergence. 

We analyse the effect of an ever increasing resolution, which leads us to regard the unknown $x$ as a function (henceforth denoted $\mu$) in some functional space $X$ of functions over a compact set $K\subset \R^p$, and the operator $A$ is now a linear mapping from the space $X$ to $\R^\ndet$.
This leads to the optimisation problem 
\begin{equation}
\label{opt_int}
 \min_{\mu \geq 0} \quad D(y,A\mu).
 \end{equation}

 The non-negativity constraint has been shown to cause sparsity in various contexts in optimisation and optimal control~\cite{Clason2019}.
 As a result, the proper functional space $X$ to be considered appears to be that of Radon measures, where discrete measures are regarded as sparse, with the operator $A$ defined for all $\mu \in X, \mu \geq 0$ by
 \[ (A \mu)_i := \int a_i \, \dd\mu, \qquad a_i \geq 0.\]
 
 Here, the assumption $a_i \geq 0$ reflects our interest in applications where the measured data is nonnegative. 
 
 This sparsity phenomenon has been observed in optimal control~\cite{Clason2017, Loheac2017}, as well as for the optimisation problem~\eqref{opt_int} when the divergence $D$ is specifically the Kullback--Leibler divergence~\cite{Verdier2020}.
 This divergence and the framework of Radon measures can even be natural from the physical point of view, as is the case in PET where the underlying model is based on Poisson point processes~\cite{Shepp1982, Verdier2021}.

 In the examples above, sparsity (which can arise in the form of Dirac masses) is undesirable as the sought-for image is expected to be at least piecewise smooth.
 In other contexts, sparsity of the signal must be enforced, as is the case for instance in sparse super-resolution where Dirac masses are the desired outcome~\cite{Denoyelle2017, Debarre2022}. 

In the latter case, the situation is completely different as the unknown signal is \emph{known} to be a sum of (nonnegative) Dirac masses and the aim is to make sure that the optimisation problem has \emph{at least one minimiser} which is a sum of Dirac masses, thereby hoping to recover the number and support of the unknown discrete measure. A very general treatment of the existence of such sparse optimisers has recently been given in~\cite{Boyer2019, Bredies2020}.


The goal of the present work is to establish general conditions for the operator~$A$ and the divergence $D$ to fulfill, so that, when the measured data \(y\) is not in the cone \(\setc{A \mu}{\mu \geq 0}\), \emph{all minimisers} are singular in the sense defined below. We use Lebesgue's decomposition theorem with respect to the Lebesgue measure: any Radon measure $\mu$ over $K$ decomposes into the sum of an absolutely continuous measure (its \textit{absolutely continuous part}) and a singular measure  (its \textit{singular part}).
\begin{definition}
\label{def:sparse} A measure $\mu$ will be said to be \textit{singular} if its singular part is non-zero, and \textit{completely singular} if it is singular and its absolutely continuous part is zero. 
\end{definition}
The paper~\cite{Verdier2020} established that optimal measures are completely singular in the specific case where
\begin{enumerate}[label={(\roman*)}, ref={(\roman*)}]
\item the functions $a_i$ are linearly independent and (real) analytic,
  \label{it:linindep}
\item $D$ is the Kullback-Leibler divergence.
  \label{it:kldiv}
\end{enumerate}
The first assumption \ref{it:linindep} is restrictive since many practical problems are such that all the functions $a_i$ are compactly supported inside the interior of $K$. This makes analyticity irreconcilable with the condition $a_i \geq 0$. 
The second assumption \ref{it:kldiv} also restricts the possible noise models to the single, albeit important, case of Poisson measurements.  

In fact, the so-called \emph{$\beta$-divergences}  have attracted interest recently in non-negative matrix factorisation~\cite{Fevotte2011}, and are  now also advocated for in some medical imaging contexts, such as in PET~\cite{Cavalcanti2019}.
This is a family of divergences depending on a real parameter \(\beta\).
It has the appealing property of interpolating between the Kullback--Leibler divergence ($\beta = 1$) and the Euclidean distance ($\beta = 2$)~\cite{Cichocki2010}, which correspond to different noise models (see \autoref{sec:noisemodels}).


In this article, we first generalise the above result (when~\ref{it:linindep} holds) to very general divergences.

Then, we proceed to treat the case of more general detector functions $a_i$ relaxing assumption~\ref{it:linindep}. This forces us to restrict the class of divergences.
Informally at this stage, we hence consider
\begin{itemize}
\item[(C)]  detector functions $a_i$ which are \emph{locally} linearly independent and \emph{piecewise} analytic,
\item[(H)] divergences which are sufficiently averse to zero values whenever the data is non-zero (see the precise statement in equation \eqref{H} in \autoref{sec:divergences}).
\end{itemize}
Most functions we are aware of are expected to satisfy $(C)$, such as those defining PET (although local linear independence might be difficult to prove for each particular case). Condition $(H)$ limits the class of divergences, as it for instance excludes the Euclidean distance. It still covers a wide class of $\beta$-divergences, f-divergences and Bregman divergences, in particular those which arise in contexts featuring nonnegativity constraints.

Our main result may then be stated as follows (see \autoref{sparse} for a precise statement).
\begin{theorem}
\label{thm1}
If $y \notin \{A \mu, \; \mu \geq 0\}$ with $A$ satisfying $(C)$ and $D$ satisfying $(H)$, then all optimal solutions $\mu^\star$ to~\eqref{opt_int} are singular.
\end{theorem}
In other words, if the data is not in the image of the cone of nonnegative measures $\{\mu \geq 0\}$ under the operator~$A$, any optimiser $\mu^\star$ is singular, even if the data has been generated from an image that is absolutely continuous with respect to the Lebesgue measure.

The condition that the data \(y\) does not belong to the image cone \(\setc{A \mu}{\mu \geq 0}\)  can be interpreted as a condition on the level of noise: the more noise there is, the more likely it is that this condition be fulfilled (see~\cite{Verdier2020} and \autoref{sec:noisemodels}).
Undesirable singular-looking images will hence arise in the high-noise regime.

Our results show that the optimisation problem itself leads to singular results.
Thus, any algorithm successfully solving~\eqref{opt_int} will inevitably lead to spiky-looking images as one keeps iterating.
In the context of medical imaging, this has been observed when using the Maximum-Likelihood-Expectation-Maximisation (ML-EM, also called the Richardson--Lucy algorithm) for solving~\eqref{opt_int}, and has been referred to in the literature as the ``night-sky'' or the ``draughtsboard'' effect~\cite{Vardi1985}.

Consequently, the same kind of artefacts will be observed for other likelihoods, hinting at the necessity of either early stopping when solving~\eqref{opt_int} (see \cite{Resmerita2007}, \cite{Oktem2019}), or adding appropriate regularisation terms of the form 
\begin{equation}
\label{reg}
 \min_{\mu \geq 0} \quad D(y,A\mu) + \lambda R(\mu),
 \end{equation}
with regularisation parameter $\lambda$ that is sufficiently large to alleviate the issue~\cite{Cavalcanti2019}.

In this work, we also provide numerical examples coming from PET where, when solving~\eqref{opt_int} for some commonly used divergences, and applying a sufficient amount of noise, reconstructions exhibit the night-sky effect (\autoref{sec5}).
As our theoretical results suggest, this effect should be more and more prominent as one keeps iterating a convergent algorithm for solving~\eqref{opt_int}, or a convergent algorithm for solving~\eqref{reg} with a sufficiently small regularisation parameter~$\lambda$.

\vspace{0.5cm}
\subsection*{Outline of the paper}
The paper is organised as follows.
In \autoref{sec2}, we define the inverse problem by setting the functional analytic framework as well as the family of divergences being considered, leading to the corresponding maximum likelihood problem.
In \autoref{sec3}, we proceed to study the resulting optimisation problem and prove our main result~\autoref{thm1}.
Section \autoref{sec4} is devoted to discussing the assumption~$(C)$ on the detector functions in more detail, in the case of a toy example and for the $2$-dimensional PET operator on a regular polygon.
We finally illustrate our results about singularity by numerical simulations in \autoref{sec5}.
They feature different examples with and without regularisation.

\section{Inverse Problem Setup}
 \label{sec2}
 \subsection{Linear inverse problem}
 \label{sec:lininv}
  We aim at reconstructing an image $\mu$ defined on a non-empty compact $K \subset \R^p$, $p \geq 1$, where $K$ is the closure of a bounded Lipschitz connected open set.
 \subsubsection{Measure-theoretic background}
The unknown image \(\mu\) is an element of the space of Radon measures, denoted $\MM(K)$, which is the topological dual space of continuous functions over the compact, denoted $\mathcal{C}(K)$.
 We endow $\MM(K)$ with the weak-$\ast$ topology, making $\mathcal{C}(K)$ its dual space.
 We also recall that the bounded sets of $\MM(K)$ are relatively compact in the weak-$\ast$ topology, by virtue of the Banach--Alaoglu Theorem~\cite{Brezis2010}.
 
 We denote the dual pairing between a function $\mu \in \MM(K)$ and a function $f \in \mathcal{C}(K)$ 
 by $\scl{\mu}{f}$, and $\MM_+(K)$ stands for the set of nonnegative Radon measures. Whenever the context is clear, we shall drop the reference to the compact $K$ and write $\MM$, $\MM_+$ and $\mathcal{C}$.
 
Finally, we recall that by Lebesgue's decomposition theorem, any measure
$\mu \in \MM_+(K)$ can uniquely be written $\mu = \mu_1 + \mu_2$, where $\mu_1 \in \MM_+(K)$ is absolutely continuous, and $\mu_2 \in \MM_+(K)$ is singular, where absolute continuity and singularity are meant with respect to the Lebesgue measure. The measure $\mu$ is then said to have a \textit{singular part} if $\mu_2 \neq 0$.
 
  \subsubsection{Operator}

 A data point \(y\) is a vector of $\ndet$ scalar nonnegative measurements, that is, 
 \[y \in \R_+^\ndet.\] 
 This vector itself typically is the realisation of some random variable with mean~$A \mu$, where $\mu \in \MM_+(K)$ is the image to be reconstructed, and $A$ is a linear mapping $A \colon \MM(K) \to \R^\ndet$.

The only assumption we make on $A$ is that it is continuous in the weak-$\ast$ topology.
From~\cite[Proposition 3.14]{Brezis2010}, this implies that $A$ is of the form 
\begin{equation}
\label{scalar} (A \mu)_i = \scl{\mu}{a_i}, \quad i = 1, \ldots, \ndet,
\end{equation} where the \emph{detector functions} $a_i$ are elements of $\mathcal{C}(K)$.
This covers the case of PET~\cite{Mair1996, Verdier2020} and more generally the setting of kernel operators: if the underlying operator in infinite dimension is of the form 
\[ \mu \longmapsto  \int_K k(\cdot,y) \, \dd\mu(y),\] for some smooth kernel $k \in \mathcal{C}(K \times K)$, the operator $A$ typically is obtained from a sampling for $\ndet$ points $x_i \in K$ or integrating the kernel over some subdomains $\Omega_i \subset K$, namely 
\[a_i = k(x_i, \cdot),  \quad \text{ or }\quad  a_i = \int_{\Omega_i} k(x, \cdot) \, \dd x, \quad i = 1, \ldots, \ndet.\]
We will make the assumption that $A$ maps $\MM_+(K)$ into the set of (componentwise) nonnegative vectors denoted $\R_+^\ndet$ which of course is equivalent to 
\begin{equation}
\label{nonnegative} 
a_i \geq 0, \qquad  i = 1, \ldots, \ndet.
\end{equation}
Finally, we assume $a_i \neq 0$ for all $i = 1, \ldots, \ndet$, as well as
\begin{equation}
\label{positive} 
\sum_{i=1}^\ndet a_i > 0.
\end{equation}
Indeed, at any point $x$ such that $\sum_{i=1}^\ndet  a_i(x) =0$, we would have no information on the unknown \(\mu\).

 Another consequence of the simple continuity assumption on $A$ is that its adjoint $A^\ast \colon \R^\ndet \to \mathcal{C}(K)$ is simply defined as 
 \[A^\ast \lambda =  \sum_{i=1}^\ndet \lambda_i a_i, \quad \lambda \in \R^\ndet.\]
 
 We shall sometimes need to know when $A^\ast$ is injective. This is of course equivalent to the linear independence of the family $(a_i)_{i = 1,\ldots, \ndet}$. 
 Note that since the codomain of \(A\) is finite dimensional, we have
 \[A^\ast \text{ is injective} \;  \iff \;  A  \text{ is surjective}.\]

 In order to solve the inverse problem with the nonnegativity constraint, we aim at solving the optimisation problem
 \begin{equation}
\label{opt}
 \operatorname*{min}_{\mu \in \MM_+(K)}  \; D (y ,A\mu).
 \end{equation}
Here, $D$ stands for some divergence over $\R_+^\ndet$, and we will use the notation 
\begin{equation}
\label{loss}
\loss(\mu) := D(y,A \mu).
\end{equation}
 
 Finally, let us define the notion of support for the various relevant cases (all these cases can be covered in one single definition, but we prefer separating them for clarity):
 \begin{itemize}
 \item for a vector $w \in \R_+^\ndet$, \[\supp(w) = \setc*{i \in \{1, \ldots, \ndet\}}{w_i > 0},\]
 \item for a nonnegative function $f \in \mathcal{C}(K)$,  \[\supp(f) = \overline{\setc*{x \in K}{f(x) > 0}},\]
 where $\overline{F}$ stands for the closure of a set $F \subset K$,
  \item for a nonnegative measure $\mu \in \MM_+(K)$,  \[\supp(\mu) := \setc[\big]{x \in K}{ \mu(N) > 0, \; \forall N \in N(x)},
\]
where $N(x)$ is the set of all open neighbourhoods of $x$. 
 \end{itemize}
 Notice that with these notations in place, we may now rewrite assumption~\eqref{positive} as \[\bigcup_{i=1}^\ndet \, \supp(a_i) = K.\]

 \subsection{Divergences}
 \label{sec:divergences}
We shall assume that $D$ is a \emph{separable} divergence, {i.e.}, 
\[ D(u,v) = \sum_{i=1}^\ndet d(u_i, v_i)
  \qquad
  u,\,v \in \R_+^\ndet
  .\]
where $d \colon \R_+ \times \R_+ \to \R_+ \cup \{+\infty\}$ is a (scalar) divergence.
When needed, we will implicitly extend the function~$d$ to the whole \(\RR^2\) by $+\infty$ for $u<0$ or $v<0$.
Throughout, we will assume that $d$ satisfies the following basic properties
  \begin{description}
\item[Separation]  \[d(u,v) = 0\quad \iff \quad  u = v.\]
\item[Convexity] for $u \in \R_+$, $v \mapsto d(u,v)$ is convex on $\R_+$.
\item[Regularity] for $u \in \R_+$, the mapping $v \mapsto d(u,v)$ is lower-semicontinuous on $\R_+$, 
\item[Coercivity] for $u \in \R_+$, the mapping $v \mapsto d(u,v)$ is coercive, {i.e.}, 
\[\forall u \in \R_+, \quad \lim_{v \rightarrow +\infty} d(u,v) = +\infty.\]
\end{description}

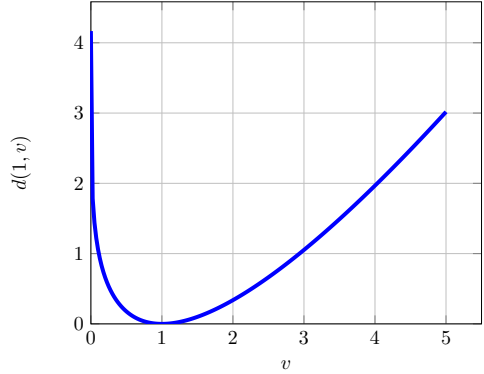
\begin{figure}[h]
  \centering
  \begin{tikzpicture}[scale=0.75]
    \begin{axis}[
      xlabel={\(v\)},
      ylabel={\(d(1,v)\)},
      grid=major,
      xmin=0,
      ymin=0,
      ]
     \addplot[blue, line width=2pt] table {divdata.dat};
    \end{axis}
  \end{tikzpicture}
  \caption{An example of map \(v \mapsto d(1,v)\) which fulfills the assumptions above.
    In particular, the derivative tends to \(-\infty\) when \(v\) approaches zero, and the divergence is zero only at the point \(v=1\).
Note that this is a plot of the function \(v \mapsto d_{\beta}(1,v)\) (see \autoref{sec:betdiv}) for \(\beta = 1.2\).
  }
\end{figure}
 
 The above assumptions ensure that, for a fixed $u \in \R_+$, $v \mapsto d(u,v)$ is subdifferentiable at every $v>0$. 
For our results about singularity of optimisers, we will also sometimes consider a subclass of such divergences for which we must specify (non-)subdifferentiability at $v = 0$ as follows: 
\begin{equation}
\label{H}
v \mapsto d(u,v) \text{ is not subdifferentiable at $0$ if $u>0$.}
\tag{H}
\end{equation}
From standard convex analysis, this is equivalent to the directional derivative of $d(u, \cdot)$ being $-\infty$ at $0$ for all $u>0$, {i.e.},
\begin{equation}
\eqref{H} \quad \iff \quad \forall u>0, \;  \lim_{t \rightarrow 0,  \, t>0} \frac{d(u,t) - d(u,0)}{t}
= -\infty.
\end{equation}

 Finally, we will need one very weak assumption involving both the operator $A$ and the divergence $D$:
 \begin{equation}
 \label{blanket}
 \text{there exists $\mu \in \MM_+$ such that $D(y, \cdot)$ is finite and continuous at $A \mu$}.
 \end{equation}
One sufficient and simpler assumption for the above condition to hold is the existence of some point $x \in K$ such that the divergences $d(y_i, \cdot)$ are continuous at $a_i(x)$ for all $i \in \{1, \ldots, \ndet\}$, which follows from choosing $\mu = \delta_{x}$.
 
  \subsection{Examples}
  \subsubsection{Beta Divergences}
  \label{sec:betdiv}

 For $u > 0$, $v>0$ scalar variables, $\beta \in (1,2]$, we define
\[ d_{\beta}(u, v) := \frac{1}{\beta(\beta - 1)}\left(u^\beta + (\beta - 1) v^\beta - \beta u v^{\beta - 1}\right),\]
which for $\beta = 2$ gives the Euclidean distance \[d_2(u,v) = \frac{1}{2}(u-v)^2,\]
and by continuity for $\beta = 1$ the Kullback--Leibler divergence
\[d_1(u,v) = u \ln\Big(\frac{u}{v}\Big) - u + v.\]
The corresponding divergence over $\R_+^\ndet$ will be denoted $D_\beta$.

More precisely, using the convention $0/0 = 0$, $0 \ln 0 = 0$, $d_{1}$ is defined for nonnegative scalars $u \geq 0$, $v\geq 0$ as follows:

  \[d_1(u,v) =
    \begin{cases}
      v & \text{if \(u = 0,\,v \geq 0\)},
      \\
+ \infty & \text{if \(u > 0,\, v = 0\)},
\\
 u \ln\big(\frac{u}{v}\big) - u + v & \text{if \(u>0,\,v>0\)}.
\end{cases}
\]
Note that the $\beta$-divergences $d_\beta$ satisfy Hypothesis~\eqref{H} for all $\beta \in [1,2)$, but $d_2$ does not.


 
\subsubsection{Reverse \(f\)- and Bregman divergences}
Given a convex function \(F \colon \RR_+ \to \RR_+ \cup \set{+\infty}\) with \(F(1) = 0\), one can define the following ``reverse \(f\)-divergence''
\[
  d^f_F(u,v) := F(v/u) u, \qquad u > 0 \quad v \geq 0,
\]
Similarly, with a convex function \(F \colon \RR \to \RR \cup \set{+\infty} \) such that  \(F\) is differentiable, we define the ``reverse Bregman divergence''
\[
  d^B_F(u,v) := F(v) - F(u) - F'(u)(v-u), \qquad u > 0 \quad v \geq 0.
\]

Both fulfill our assumptions as long as
\begin{itemize}
\item $F$ is lower semicontinuous,
\item \(F'(0) = -\infty\),
\item they are suitably extended to the case \(u=0\) in such a way that
  \(d(0,\cdot)\) is a nonnegative convex, lower semicontinuous and coercive function.
\end{itemize}

Note that $\beta$-divergences are not reverse f-divergences nor reverse Bregman divergences for $\beta \in (1,2)$, the Kullback--Leibler divergence $d_1$ is a reverse f-divergence but not a reverse Bregman divergence, and the Euclidean distance $d_2$ is a reverse Bregman divergence but not a reverse f-divergence.

   \subsection{The noise model.}
   \label{sec:noisemodels}
   In general, $y$ is drawn according to a distribution parameterised by $w = A \mu$ as well as some additional dispersion parameter $\phi>0$ controlling the noise level.
   For a better understanding of why the condition $y \notin  \imcone$ typically arises in the high noise case, we review the underlying statistical model in the specific case of $\beta$-divergences.

   In this setting, the minimisation problem~\eqref{opt} is (up to constants) the corresponding (negative-log) likelihood maximum problem.
   We write the statistical model for $y$ and $w$ as scalar variables, as the full statistical model is straightforwardly obtained by assuming independent components.

   A general way to write the noise model giving rise to $\beta$-divergences is to use the so-called Tweedie distributions~\cite{Simsekli2013}.
  The $\beta$-divergences are a special case of such distributions, as the corresponding Tweedie distribution is given for $w$ fixed by
\[ y \mapsto H_\beta(y,\phi) \operatorname{exp}\left(-\frac{1}{\phi} d_\beta(y,w)\right),\]
where $H_\beta$ is a normalisation factor for the above function to integrate to $1$.
Observing the data $y$ to estimate $w$, minimising the negative log-likelihood problem indeed is equivalent to minimising $w \mapsto d_\beta(y,w)$. We refer to~\cite{Simsekli2013} for more details.

The underlying density is not always tractable (this is the case if $1< \beta < 2$), making the noise model unclear. In other cases, the noise model can be further identified as follows.

\begin{description}
\item[Case $\beta = 2$] the noise model is Gaussian, {i.e.},  \[y \sim \mathcal{N}(w, \phi).\]
\item[Case $1< \beta<  2$] to the best of our knowledge, no explicit model is known.
\item[Case $\beta = 1$] the noise model is Poisson, {i.e.}, \[y \sim\phi \, \mathcal{P}\Big(\frac{1}{\phi} w\Big).\]
\end{description}

Furthermore, $y$ has mean $w$ and variance $\phi^{2-\beta} w$: $y$ concentrates around $w$ as the noise level $\phi$ vanishes.
Hence, the less noise there is, the more likely it is that $y \in \imcone$. For a more quantitative version of this statement in the case of $d = d_1$, {i.e.}, Poisson distributed measurements, see~\cite{Verdier2020}.
 
\begin{remark}
Note that the Gaussian case $\beta = 2$ is the only one which does not (necessarily) lead to nonnegative data $y \in \R_+^\ndet$. Yet, we will always make this assumption throughout. Indeed, the Gaussian noise model is frequently used as an approximate basic model, even for inverse problems where the data satisfies $y \geq 0$ (for physical reasons). The problem is then typically solved using nonnegative least-squares, hence it is worth studying this case as well.
\end{remark}



 \section{Optimisation problem}
 \label{sec3}
 We now investigate the optimisation problem~\eqref{opt}, starting with a related optimisation problem and its dual.
 \subsection{Related optimisation problem and its dual}
 We consider the cone
 \[
   \imcone = \setc*{A \mu}{\mu \in \MM_+}
   .
 \]
 We notice that \(\imcone\) is a closed convex set:

 \begin{lemma}
   \label{prop_cone_closed}
  The cone \(\imcone\) is closed.
 \end{lemma}
 \begin{proof}
   Pick a sequence $w_n = A \mu_n$ in $\imcone$ converging to some $w \in \R_+^\ndet$.
   Appealing to \eqref{positive}, we choose $c>0$ such that $\sum_{i=1}^\ndet a_i \geq c$ and write 
   \[ \mu_n(K) \leq \frac{1}{c} \int_K \Big(\sum_{i=1}^\ndet a_i\Big) \dd \mu_n = \frac{1}{c} \sum_{i=1}^\ndet (w_n)_i, \]
   where the last quantity is bounded since $(w_n)$ converges. Hence, the sequence $(\mu_n)$ is bounded, it has a weak-$\ast$ converging subsequence, say to some $\mu \in \MM_+$. Along the subsequence, $(A \mu_n)$ converges to $A \mu$ by weak-$\ast$ continuity of $A$, which proves that $w = A \mu \in \imcone$.
 \end{proof}
 
 The original optimisation problem~\eqref{opt} is related to the following optimisation problem:
 \begin{equation}
\label{cone_formulation}
\operatorname*{min}_{w \in \imcone}  \; D (y,w).
\end{equation}
More precisely, for any $w^\star$ optimal for the above problem, any measure $\mu^\star \in \MM_+$  such that $A \mu^\star = w^\star$ is optimal for the original problem.

\begin{lemma}
\label{unique_min}
The minimum in~\eqref{opt} is attained.
\end{lemma}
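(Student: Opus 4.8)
The plan is to avoid working directly with minimising sequences in $\MM_+(K)$. Since $A$ may annihilate mass — if all the $a_i$ vanish at a common point, measures concentrating there have arbitrarily large total variation yet bounded image — there is no \emph{a priori} bound on $\|\mu_n\|$ for a minimising sequence, so one cannot simply invoke weak-$\ast$ compactness on the $\mu_n$ side. Instead I would transfer the problem to the finite-dimensional cone $C = \imcone \subset \R^\ndet$, using the reduction recorded just above the statement: it suffices to prove that $w \mapsto D_\beta(y|w)$ attains its minimum over $C$, because then any $\mu^\star \in \MM_+(K)$ with $A\mu^\star = w^\star$ is a minimiser of $\loss$, and conversely $\loss(\mu) = D_\beta(y|A\mu) \geq \min_{w\in C} D_\beta(y|w)$ for every $\mu$.

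\textbf{Step 1 (setup on the cone).} Recall that $C$ is closed and convex, and that by~\eqref{positive} one has $C \subseteq \R_+^\ndet$. The functional $w \mapsto D_\beta(y|w) = \sum_{i=1}^\ndet d_\beta(y_i|w_i)$ is then lower semicontinuous on $\R_+^\ndet$, as a finite sum of the lower semicontinuous maps $v \mapsto d_\beta(y_i|v)$, and nonnegative (being a sum of divergences).

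\textbf{Step 2 (coercivity).} I would check that $D_\beta(y|w) \to +\infty$ as $\|w\| \to \infty$ with $w \in \R_+^\ndet$. This reduces to the scalar fact that, for fixed $y_i \geq 0$, $d_\beta(y_i|v) \to +\infty$ as $v \to +\infty$, which one reads off the closed form in each regime (dominant term $\tfrac{1}{\beta}v^\beta$ for $\beta \in (0,2)$, $\tfrac{1}{2}v^2$ for $\beta = 2$, and $\log v$ for $\beta = 0$). Hence if $\|w_n\| \to \infty$, some coordinate $w_{n,i} \to \infty$ along a subsequence, the corresponding term blows up, and the remaining ones stay $\geq 0$, so $D_\beta(y|w_n) \to +\infty$.

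\textbf{Step 3 (conclusion).} Take a minimising sequence $(w_n) \subset C$ for~\eqref{cone_formulation}. If the infimum equals $+\infty$ there is nothing to prove; otherwise $\big(D_\beta(y|w_n)\big)_n$ is bounded, so by Step 2 the sequence $(w_n)$ is bounded in $\R^\ndet$ and, up to extraction, $w_n \to w^\star$. Closedness of $C$ gives $w^\star \in C$, and lower semicontinuity gives $D_\beta(y|w^\star) \leq \liminf_n D_\beta(y|w_n) = \inf_{w \in C} D_\beta(y|w)$, so $w^\star$ is optimal for~\eqref{cone_formulation}. Picking $\mu^\star \in \MM_+(K)$ with $A\mu^\star = w^\star$ (possible since $w^\star \in C = \imcone$) then yields a minimiser of~\eqref{opt}. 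The only genuinely non-formal point is the coercivity in Step 2, which must be verified case by case for $\beta \in (1,2]$, $\beta = 1$, $\beta \in (0,1)$ and $\beta = 0$ since the closed form and its sign behaviour change; everything else is the textbook ``lower semicontinuous plus coercive on a closed set'' argument, which is legitimate here precisely because we have passed to finite dimension.
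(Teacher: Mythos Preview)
Your proof is correct and follows the same strategy as the paper: reduce to the finite-dimensional problem~\eqref{cone_formulation} on the closed cone $C$, check coercivity and lower semicontinuity of $w \mapsto D_\beta(y|w)$, conclude by the direct method, and pull the minimiser back via $A\mu^\star = w^\star$. The paper's version is terser (it only records the asymptotic behaviour $\tfrac{1}{\beta}\|w\|_\beta^\beta$ for $\beta>0$ and the logarithmic growth for $\beta=0$ without spelling out the minimising-sequence extraction), but the content is identical.
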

\begin{proof}
  We show that the optimal value in the minimisation problem~\eqref{cone_formulation} is attained.
  The function $w \mapsto D(y,w)$ is coercive and lower semicontinuous. Since the (non-empty) cone $\imcone$ is closed (\autoref{prop_cone_closed}), there exists an optimal $w^\star \in \imcone$ for the problem~\eqref{cone_formulation}. Any~$\mu^\star \in \MM_+$ such that $A\mu^\star =w^\star$ then provides a minimiser for the original problem~\eqref{opt}. 
\end{proof}

We now compute the (Lagrange) dual problem to the problem~\eqref{cone_formulation}. 
These computations turn out to be crucial in analysing results from numerical simulations, determining whether we should expect singularity or not for some given $y \in \R_+^\ndet$ from appropriately defined \emph{singularity certificates}.

We define the cone $\imcone^\ast := \setc*{\lambda \in \R^\ndet}{\scl{\lambda}{w} \geq 0, \;\, \forall w \in \imcone}$ dual to $\imcone$, which can be characterised as in~\cite{Georgiou2005} by \[\imcone^\ast = \setc*{\lambda \in \mathbb{R}^\ndet}{A^* \lambda \geq 0 \text{ on } K}.\]
The dual problem writes 
\begin{equation}
\label{dual_problem}
\operatorname*{max}_{\lambda \in \imcone^\ast}  \; g(\lambda)
,  
\end{equation}
where the function \(g \colon \R^\ndet \to \R\) is defined for $\lambda \in \imcone^\ast$ by
\begin{equation}
  \label{eq:defg}
  g(\lambda) := \min_{ w \in \R_+^\ndet} D(y,w)- \langle \lambda , w \rangle
  .
\end{equation}
Note that strong duality holds since the problem is convex and Slater's condition is obviously satisfied since the minimisation occurs over a cone in finite dimension.

The point of taking the Lagrange dual is that, since $D$ decomposes, so does $g$ and we find
\[ g(\lambda) = \sum_{i=1}^m \min_{w_i \geq 0} \paren[\big]{d(y_i,w_i)- \lambda_i w_i}. \]
As a result, defining
\begin{equation}
  \label{eq:defh}
  h(y, \lambda) := 
  \min_{w \geq 0} \left(d(y,w)- \lambda w \right)
\end{equation}
for a scalar $y \geq 0$, the resulting dual function will take the form
\[g(\lambda) = \sum_{i=1}^m h(y_i, \lambda_i).\]
The explicit computation of the function $h$ in \eqref{eq:defh} is carried out for the case of $\beta$-divergences in \autoref{appA}.

 \subsection{Optimality conditions}
We now use Fenchel duality to compute the optimality conditions for problem~\eqref{opt}. The assumption that the operator $A$ is continuous in the weak-$\ast$ topology plays a crucial role. 



\begin{proposition}
Let $\mu^\star$ be an optimal measure for~\eqref{opt}. 
Then, there exists
$\lambda^\star \in \partial D(y,\cdot)(A \mu^\star)$ such that
\beq 
\label{kkt}
A^\ast \lambda^\star \geq 0 \text { on } K, \qquad A^\ast \lambda^\star = 0 \text{ on } \supp(\mu^\star).
\eeq
\end{proposition}
\begin{proof}
We use the Fenchel--Rockafellar Theorem~\cite{Rockafellar1966}, with the optimisation problem rewritten as 
\[\min_{\mu \in \MM} D(y, A\mu) + \delta_{\MM_+}(\mu),\]
where $\delta_{\MM_+}$ is the indicator function in the convex analytic sense, i.e. $\delta_{\MM_+}(\mu) = 0$ if $\mu \in \MM_+$ and $+\infty$ otherwise.

In the context of the Fenchel--Rockafellar Theorem we need \emph{paired spaces}.
The natural choice is
$\mathcal{C}$ (endowed with its strong topology) and $\MM$ (endowed with its weak-$\ast$ topology).
We may apply the theorem thanks to the hypothesis~\eqref{blanket} and we obtain
\[\text{$\mu^\star$ is optimal} \quad \Longleftrightarrow \quad 0 \in A^\ast \partial D(y,\cdot) (A \mu^\star)  + N_{\MM_+} (\mu^\star),\]
where $N_{\MM_+} (\mu)$  is the normal cone of $\MM_+$ at $\mu$, defined by \[N_{\MM_+} (\mu):= \setc*{f \in \mathcal{C}}{\forall \nu \in \MM_+, \; \scl{\mu-\nu}{f} \geq 0}.\]
The normal cone can be identified as \[ N_{\MM_+}(\mu) = \setc*{ f \in \mathcal{C}} {f\leq 0 \text{ on } K, \; f = 0 \text{ on } \supp(\mu)},\]
see~\cite{Verdier2020}.
Picking $\lambda^\star \in \partial D(y,\cdot)(A \mu^\star)$ such that $A^\ast \lambda^\star \in -N_{\MM_+} (\mu^\star)$, we  exactly obtain~\eqref{kkt}.
\end{proof}
Note that for $\mu^\star$ optimal and with the notations of the above result, the separable form of $D$ ensures
\begin{equation} 
\label{subdiff_splitting} 
\lambda^\star \in \partial D(y,\cdot)(A \mu^\star) \quad \iff \quad\forall i \in \{1, \ldots, \ndet\},\; \lambda_i^\star \in \partial  d(y_i, \cdot)((A \mu^\star)_i).\end{equation}

\begin{remark}
\label{Separation}
Let us then emphasise the following important (yet straightforward) property: if $\mu^\star$ is optimal, then any $\lambda^\star \in \partial D(y,\cdot)(A \mu^\star)$ is such that
\[y_i \neq (A \mu^\star)_i \quad \implies \quad \lambda_i^\star \neq 0.\]
Indeed, assume $\lambda_i^\star = 0$. Then the inclusion $\lambda_i^\star = 0 \in \partial d(y_i, \cdot)((A \mu^\star)_i)$ is equivalent to $(A \mu^\star)_i$ minimising $d(y_i, \cdot)$, which by the separation property enforces $y_i = (A\mu^\star)_i$. 
\end{remark}

 \subsection{Singularity theorems}
 We now address the following question 
 \begin{quote}
 {If the data \(y\) is not in the image cone \(\imcone\), when do the optimality conditions~\eqref{kkt} lead to singular, or completely singular measures?}
 \end{quote} 
  By singular, we recall that we mean measures that have a singular part with respect to the Lebesgue measure, where completely singular measures are singular measures which furthermore have a zero absolutely continuous part, as per Definition~\ref{def:sparse}.
In order to prove that a given measure $\mu$ is completely singular, our approach will be to show that $\mu \neq 0$ and that $\supp(\mu)$ has zero Lebesgue measure.

 We begin with a result ensuring that all optimal measures are completely singular (and not only singular), but which holds only under the restrictive assumption that the detector functions $a_i$ be analytic. This generalises the result of~\cite{Verdier2020} to general divergences.
 
\begin{proposition}
\label{analytic_case}
Assume that the functions $(a_i)_{i = 1,\ldots, \ndet}$ are linearly independent in~$\mathcal{C}(K)$, analytic in $\mathrm{int}(K)$.

Then, if $y \notin \imcone$, any optimal measure, if it is non zero, is completely singular. 
\end{proposition}

Note that condition~\eqref{H} is one sufficient condition that $D$ can satisfy to ensure that all optimal measures $\mu^\star$ satisfy $\mu^\star \neq 0$, as the proof of~\autoref{sparse} shows.
 
 \begin{proof}
Let us pick some optimal measure $\mu^\star \neq 0$. 
The optimality conditions~\eqref{kkt} provide $\lambda^\star$ such that
$\lambda^\star \in \partial D(y,\cdot)(A \mu^\star)$ and $A^\ast \lambda^\star = 0$ on $\supp(\mu^\star)$. In other words, $\supp(\mu^\star)$ is contained in the zeros of the continuous function $\varphi^\star := A^\ast \lambda^\star = \sum_{i=1}^\ndet \lambda_i^\star a_i$. 

Since $y \notin \imcone$, there exists $i$ such that $y_i \neq (A \mu^\star)_i$. Using Remark~\ref{Separation}, this implies $\lambda_i^\star \neq 0$. Now, the linear independence of the functions $a_i$ shows that $\varphi^\star$ is not identically zero. 
Since the function is by assumption analytic on $\mathrm{int}(K)$, its zero set intersected with $\mathrm{int}(K)$ is of zero Lebesgue measure. Note that $\partial K$ has zero measure since $\mathrm{int}(K)$ is assumed to be a Lipschitz bounded connected open set. Hence the support of~$\mu^\star$ has zero Lebesgue measure, which concludes the proof.
 \end{proof}

 The main reason why this theorem is not satisfactory is that in many applications, the detector functions $a_i$ are compactly supported inside $K$, even though they may be analytic (or piecewise analytic) on their support.
 This is why we relax the analyticity assumption and consider now detector functions $a_i$ which are only \emph{piecewise} analytic.
 Dealing with this more complicated case also requires restricting our attention to divergences which satisfy the property~\eqref{H}.

We make some assumptions on the detector functions $a_i$, which require some notations.
First,  we define
\[J_i := \setc[\big]{j \in \set{1, \ldots, \ndet}}{\mathrm{supp}(a_j) \cap  \mathrm{supp}(a_i)\neq \emptyset}
  \qquad
  i \in \set{1, \ldots, \ndet}
  .
\]
It is the set storing which detector functions $a_j$ are active on $\supp(a_i)$.
In particular, we have $i \in J_i$.

We consider the following condition, which combines \emph{local linear independence} and \emph{piecewise analyticity}:
\begin{equation}
\label{C}
\text{there exists a partition $K = \bigcup_{k = 1}^r \overline{\Omega}_k$ such that}
\tag{C}
\end{equation}
\begin{itemize}
\item the sets $\Omega_k$, $k \in \{1,\ldots, r\}$ are Lipschitz open connected sets,
\item the detector functions $a_i$, $i  \in  \{1, \ldots, m\}$ are piecewise analytic on the partition, i.e., for each $i \in  \{1, \ldots, m\}$, $k \in \{ 1, \ldots, r\}$, $a_i$ is analytic on $\Omega_k$,
\item for all $i \in \{1, \ldots, m\}$, the family $(a_j)_{j \in J_i}$ is linearly independent in $\mathcal{C}(\Omega_k)$ for any $k$ such that $\Omega_k \cap \mathrm{supp}(a_i)\neq \emptyset$.
\end{itemize}

Then, the following holds.
\begin{theorem}
\label{sparse}
Assume that the divergence $D$ satisfies~\eqref{H}, and that the functions $(a_i)_{i = 1,\ldots, \ndet}$ satisfy~\eqref{C}.
Then, if $y \notin \imcone$, any $\mu^\star$ optimal is singular. 

More precisely, for any $i_0 \in \{1, \ldots, m\}$ such that $y_{i_0} \neq (A\mu^\star)_{i_0}$, $\mu^\star_{|\mathrm{supp}(a_{i_0})}$ is completely singular.
\end{theorem}
\begin{proof}
Let $\mu^\star$ be optimal. We may write the optimality conditions~\eqref{kkt}, namely
\[A^\ast \lambda^\star \geq 0 \text{ on } K, \quad A^\ast \lambda^\star= 0 \text{ on } \supp{(\mu^\star)},\]
where $\lambda_i^\star \in \partial \, d(y_i, \cdot)((A \mu^\star)_i)$ for all $i$ from~\eqref{subdiff_splitting}.

We shall prove that the following key property holds:
\begin{equation}
\label{propertyP}
\tag{$\mathcal{P}$}\forall i \in \{1, \ldots, m\}, \quad  y_i \neq (A \mu^\star)_i \implies (A \mu^\star)_i > 0.
\end{equation}
Indeed, let us pick $i_0$ such that $(A \mu^\star)_{i_0} = 0$; we need to show that $y_{i_0} =(A \mu^\star)_{i_0}  = 0$.
If we had $y_{i_0}>0$, we would find 
\[\lambda_{i_0} \in \partial d(y_{i_0}, \cdot)((A \mu^\star)_{i_0}) = \partial d(y_{i_0}, \cdot)(0),\]
 contradicting the emptiness of the subdifferential as given by~\eqref{H}, and  hence proving~\eqref{propertyP}.


Continuing with $i_0 \in\{1, \ldots, m\}$ such that $y_{i_0} \neq (A \mu^\star)_{i_0}$ and hence such that $\lambda_{i_0}^\star \neq 0$ as explained in Remark~\ref{Separation}, Property \eqref{propertyP} entails
\begin{equation}
\label{non-empty}  
\mathrm{supp}(\mu^\star) \cap  \mathrm{supp}(a_{i_0})  \neq \emptyset,
\end{equation}
which in particular shows that $\mu^\star_{|\mathrm{supp}(a_{i_0})} \neq 0$.
The optimality of $\mu^\star$ also ensures 
\[ \mathrm{supp}(\mu^\star)  \subset \setc[\bigg]{x \in K}{\sum_{j=1}^m \lambda_j^\star a_j(x) = 0}
  .
\]
Hence, 
we have
\[\mathrm{supp}(\mu^\star)  \cap  \mathrm{supp}(a_{i_0})  \subset \setc[\bigg]{x \in \mathrm{supp}(a_{i_0})}{\sum_{j \in J_{i_0}} \lambda_j^\star a_j(x) = 0}
  ,
\]
and the set on the right-hand side cannot be empty owing to~\eqref{non-empty}. We denote it~$K_{i_0}$.

Now let us have a closer look at this set.
Suppose that for some \(k\), the set $\overline{\Omega}_k$ intersects $\supp(a_{i_0})$.
The set $K_{i_0}$ may contain parts of the edges $\partial \Omega_k$.
Other than that, its intersection with $\Omega_k$ cannot have positive Lebesgue measure.
If it were the case, analyticity would imply $\sum_{j \in J_{i_0}} \lambda_j^\star a_j = 0$ identically on $\Omega_k$.
Then, linear independence would enforce $\lambda_j^\star=0$ for all $j \in J_{i_0}$, contradicting $\lambda_{i_0}^\star \neq 0$.
Hence, the support of $\mu^\star_{|\mathrm{supp}(a_{i_0})}$ is included in a finite union of zero-measure sets (lying in the interior of the partition pieces) and of $(p-1)$-dimensional edges (lying at their boundary). As a result, the support of $\mu^\star_{|\mathrm{supp}(a_{i_0})}$ is of measure zero, which concludes the proof.
\end{proof}

\begin{remark}
We reached the conclusion that if $y \notin A(\MM_+)$, any optimal measure~$\mu^\star$ will be such that $\mu^\star_{|\mathrm{supp}(a_{i_0})}$ is not the zero measure and has support of Lebesgue measure zero for at least some $i_0 \in \{1, \ldots, m\}$. In fact, one can be a bit more specific and assert that $\mu^\star_{|\mathrm{supp}(a_{i_0})}$  has support included in a union of subvarieties, each of them having dimension at most $p-1$~\cite[Theorem~6.3.3]{Krantz2002}.
\end{remark}

\begin{remark}
\label{Condition_Gradient}
We emphasise that~\autoref{sparse} does use the fact that $\lambda^\star \in \partial D(y,\cdot)(A \mu^\star)$, the optimality condition $A ^\ast \lambda^\star = 0$ over $\mathrm{supp}(\mu^\star)$, but it does not use the optimality condition $A^\ast \lambda^\star \geq 0$ over $K$. The second optimality condition combined with the first one indeed yields
\[\mathrm{supp}(\mu^\star)  \subset \argmin \paren[\bigg]{\sum_{j=1}^m \lambda_j^\star a_j}.\]
This in turn can only make measures ``more singular''.
For instance, one has for all~$k \in \{ 1, \ldots, r\}$
\[ \mathrm{supp}(\mu^\star)  \cap \Omega_k \subset \setc[\bigg]{x \in \Omega_k}{\sum_{j=1}^m \lambda_j^\star \nabla a_j(x) = 0}
  .
\]

\end{remark}

\section{Detector Functions $a_i$ leading to singularity}
\label{sec4}
We investigate a few examples of detector functions and discuss whether they  satisfy Property (C) or not. We start with a toy example and then consider a situation which more closely matches functions encountered in practice (in the case of PET).

\subsection{Shifted polynomials in dimension $1$}

Let $\R_k[X]$  denote the vector space of real polynomials of degree at most~$k \geq 0$ over $\R$.
\begin{lemma}
\label{ind}
Let $P \in \R_k[X]$ with $\mathrm{deg}(P) = k$, and $z_1, \ldots, z_r$ be distinct in $\R$. Then
\[\text{ the family 
$(P(\cdot + z_1), \ldots, P(\cdot + z_r))$ is linearly independent in $\R_k[X]$ if $r \leq k+1$}.\]
\end{lemma}

\begin{proof}
  Since $P$ has degree \(k\), the set of vectors \[e_{\alpha} := {\frac{D^\alpha P}{\alpha !}} \qquad {0 \leq \alpha \leq k}\] is a basis of $\R_k[X]$.
  Now, using the Taylor formula for polynomials, we obtain
  \[ P(X+z) = \sum_{\alpha = 0}^k z^\alpha \frac{D^\alpha P}{\alpha !}(X) = \sum_{\alpha=0}^k z^{\alpha} e_{\alpha}(X)
    \qquad
    z \in \RR
    .
  \]
If we express $(P(\cdot + z_1), \ldots, P(\cdot + z_r))$ in the basis \(e_{\alpha}\), we thus obtain the Vandermonde matrix
$$
\left(\begin{array}{rrrrrr}
1 & 1  &\dots & 1\\
z_1 & z_2  &\dots & z_r
\vspace{0.1cm}
 \\
z_1^{2}&  z_2^{2} &\dots &  z_r^{2}\\ 
  \vdots &\vdots &\ldots& \vdots\\
z_1^{k}&  z_2^{k} &\dots &  z_r^{k}\end{array}
\right).
$$
Hence, since the shifts $z_1, \ldots, z_r$ are distinct, this matrix is of rank $r$ as soon as $r \leq  k+1$, which concludes the proof.
\end{proof}

Now, consider some univariate polynomial $P \in \R_k[X]$ such that $P > 0$ on some interval $(a,b)$, with $P(a) = P(b) = 0$. Let $a_0 = P \mathds{1}_{(a,b)}$ and define a countable family of shifted functions by
\[a_0(\cdot - l h),\quad  l \in \Z,\]
where $h>0$ is the grid spacing.
Suppose $K$ is some compact interval in $\R$, and~$\ndet$ stands for the number the above functions whose support intersects the interior of~$K$, which we denote $a_1, \ldots a_{\ndet}$. For an example, see~\autoref{fig:shifts}.
\begin{corollary}
\label{dim1}
Let $r := \ceil*{\frac{b-a}{h}}$ and assume $r \leq k+1$. Then the functions $(a_i)_{1 \leq i \leq \ndet}$ satisfy Assumption~\eqref{C}.
\end{corollary}
\begin{proof}
The functions $(a_i)_{1 \leq i \leq \ndet}$ are clearly piecewise analytic. Let us tackle the issue of local linear independence in more detail. We let $\Omega \subset K$ be any open interval such that all functions $a_i$ are analytic on $\Omega$. It is easily checked that there are at most~$r$ functions active on each such set $\Omega$. Hence, if $i$ is such that $\mathrm{\supp}(a_i)$ intersects $\Omega$, a linear combination of $(a_j)_{j \in J_i}$ writes 
\[ \sum_{\lambda_j \in J_i}\lambda_j P(\cdot + z_j),\] for some distinct shifts $z_j$ (which are multiples of $h$). This polynomial vanishes on~$\Omega$, and hence is the zero polynomial. We may then use Lemma~\ref{ind} to infer that $\lambda_j = 0$ for all $j \in J_i$, hence the local linear independence.
\end{proof}


\begin{figure}
  \centering
  \includegraphics[width=.8\textwidth]{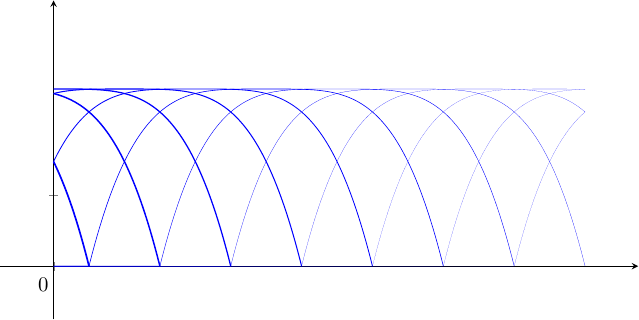}
  \caption{Example of shifts with $P(x) := 1 -x^4$, $a=-1, b=1$, $h = \frac{2}{5}$, shown over the compact $K = [0,3]$. The resulting functions satisfy Assumption~\eqref{C}, as per Corollary~\ref{dim1}.
  }
  \label{fig:shifts}
\end{figure}

\subsection{PET functions on a regular polygon}
\label{sec:petfunc}
By \textit{PET functions}, we informally refer to the functions associated to a continuous-discrete PET forward operator. In view of illustrating why PET functions are expected to satisfy Assumption~\eqref{C}, we focus on a representative example of PET, the 2D case with detectors regularly placed on the unit circle. 

Let us review the  basics of PET physics: $a_i(x)$ stands for the probability that an emission  that occurred at position $x$ leads to a detection by the $i$th pair of detectors.
More precisely, a positron emitted at position $x$ interacts with an electron, almost instantly and hence at a position which can be approximated to be $x$. This leads to the emission of two photons in opposite directions, where the direction is uniformly chosen at random. These two photons are then (almost) simultaneously detected by a pair of detectors.

\subsubsection{Modelling for PET functions}
Let $\theta = \frac{2 \pi}{N}$ for some $N \in \mathbb{N}^*$.
Working with complex notations, we denote $z_k = \ee^{\ii k \theta}$, $k = 0, \ldots, N-1$. Define $P_{N}$ to be the regular polygon associated to the points $z_k$, which lies inside the unit disk. Throughout this section, $T(u,v,w)$ stands for the (closed) triangle defined by three non-aligned complex numbers $u$, $v$, $w$, and $\mathrm{int}(B)$ denotes the interior of a set~$B \subset \mathbb{C}$.

For convenience, we use two indices $j,k$ to index functions, letting $a_{j,k}$ be the function associated to the pair of detectors formed by the line segments $[z_j, z_{j+1}]$ and $[z_k, z_{k+1}]$ for  $0 \leq j < k \leq N-1$. 
There are $\textstyle \ndet = \frac{1}{2} N(N-1)$ such functions.

Given two such segments and $z \in P_N$,  $a_{j,k}(z)$ equals the probability that a line passing through $z$ intersects both line segments, when the line orientation is chosen uniformly at random. This definition is ambiguous at the vertices $z_k$ since these points belong to two detectors. In fact, one can decide for a convention at these points, but whatever the choice, some of the functions will not be continuous there, see also~\autoref{fig:petfuncview}. 

In practice, however, the object to be imaged will not reach the detectors: hence the compact $K$ to be considered will typically satisfy $K \subset  \mathrm{int}(P_N)$.  This in turn will ensure that we are in the framework of functions  in $\mathcal{C}(K)$.
In what follows, we shall thus define the functions $a_{j,k}$ over $\mathrm{int}(P_N)$. We note, however, that all the definitions given below are valid on $P_N \setminus\set{z_0, \ldots,z_{N-1}}$.

\subsubsection{Formulae for PET functions.}
We let $0 \leq j < k \leq N-1$ be fixed. Given the probabilistic model chosen for the PET functions, computing $a_{j,k}(z)$ boils down to computing angles (after proper normalisation). 

\textit{Adjacent detectors.} First assume that the two segments have one point in common. In other words, we are interested in the functions $a_{j, j+1}$, $0 \leq j \leq N-2$, and $a_{0,N-1}$. 

The functions $a_{j, j+1}$, $0 \leq j \leq N-2$ vanish outside of $T(z_j, z_{j+1}, z_{j+2})$. 
Inside the triangle and for $z \neq z_j, z_{j+2}$, the angle we are looking for is the (non-oriented) angle $(\overrightarrow{z_j z}, \overrightarrow{z z_{j+2}})$.
Hence, after normalisation and using the notation $\mathrm{Arg}$ for the principal argument, we find in complex notations
\begin{equation}
\label{adjacent_1}
\forall z \in T(z_j, z_{j+1}, z_{j+2}) \cap \mathrm{int}(P_N), \quad  a_{j,j+1}(z) = \frac{1}{\pi} \mathrm{Arg}\bigg(\frac{z_{j+2}-z}{z-{z_{j}}}\bigg).
\end{equation}
These functions are continuous on $\mathrm{int}(P_N)$ and analytic on $\mathrm{int}(T(z_j, z_{j+1}, z_{j+2}))$.

The above reasoning also covers the function $a_{0,N-1}$, which vanishes outside of $T(z_{N-1}, z_0, z_1)$, and 
\begin{equation}
\label{adjacent_1}
\forall z \in T(z_{N-1}, z_0, z_1)  \cap \mathrm{int}(P_N), \quad  a_{0,N-1}(z) = \frac{1}{\pi}  \mathrm{Arg}\bigg(\frac{z_{N-1}-z}{z-{z_{1}}}\bigg).
\end{equation}

\textit{Non-adjacent detectors.} Now, consider the remaining functions $a_{j,k}$, i.e., when $2 \leq k-j \leq N-2$. Then, $[z_j, z_{j+1}]$ and $[z_k, z_{k+1}]$ have no point in common and the lines $(z_{j+1}, z_k)$ and $(z_j, z_{k+1})$ are easily shown to be parallel. 
Again, $a_{j, k}$ is zero outside of the closed trapezium defined by $z_j, z_{j+1}, z_k, z_{k+1}$. 

With $c_{j,k}$ denoting the point at which the diagonals of the trapezium intersect, we find by reasoning as in the adjacent case
\begin{itemize}
\item If $z \in T(z_j, z_{j+1}, c_{j,k})  \cap \mathrm{int}(P_N)$, 
\begin{equation}
\label{non_adjacent_1}
a_{j,k}(z) = \frac{1}{\pi} (\overrightarrow{z_k z}, \overrightarrow{z_{k+1} z}) = \frac{1}{\pi} \mathrm{Arg}\bigg(\frac{z-z_{k+1}}{z-{z_{k}}}\bigg).
\end{equation}
\item If $z \in T(z_k, z_{k+1}, c_{j,k})  \cap \mathrm{int}(P_N)$, 
\begin{equation}
\label{non_adjacent_2}
a_{j,k}(z) =  \frac{1}{\pi} (\overrightarrow{z_j z}, \overrightarrow{z_{j+1} z}) = \frac{1}{\pi} \mathrm{Arg}\bigg(\frac{z-z_{j+1}}{z-{z_{j}}}\bigg).
\end{equation}
\item If $z \in T(z_{j+1}, z_k, c_{j,k}) \cap \mathrm{int}(P_N)$, 
\begin{equation}
\label{non_adjacent_3}
a_{j,k}(z) =  \frac{1}{\pi} (\overrightarrow{z_k z}, \overrightarrow{z z_{j+1}}) = \frac{1}{\pi} \mathrm{Arg}\bigg(\frac{z_{j+1}-z}{z-{z_{k}}}\bigg).
\end{equation}
\item If $z \in T(z_{k+1}, z_j, c_{j,k})  \cap \mathrm{int}(P_N)$, 
\begin{equation}
\label{non_adjacent_4}
a_{j,k}(z) =  \frac{1}{\pi} (\overrightarrow{z_j z}, \overrightarrow{z z_{k+1}}) =  \frac{1}{\pi} \mathrm{Arg}\bigg(\frac{z_{k+1}-z}{z-{z_{j}}}\bigg).
\end{equation}
\end{itemize}
One can check that these definitions are coherent where the triangles intersect. We note that these functions are continuous on $\mathrm{int}(P_N)$. Furthermore, they are analytic on all four open subtriangles $\mathrm{int}(T(z_j, z_{j+1}, c_{j,k}))$, $\mathrm{int}(T(z_k, z_{k+1}, c_{j,k}))$, $\mathrm{int}(T(z_{j+1}, z_k, c_{j,k}))$ and $\mathrm{int}(T(z_{k+1}, z_j, c_{j,k}))$.

\begin{remark}
In fact, the formulae~\eqref{non_adjacent_1}-\eqref{non_adjacent_2}-\eqref{non_adjacent_3}-\eqref{non_adjacent_4} derived above apply to any pair of detectors given by two line segments that do not intersect.  
\end{remark}
The situation is illustrated by~\autoref{fig:petfuncview} which shows the plot of a typical PET function between two given segments.~\autoref{Polygon} shows the supports of two PET functions in the case where $N=12$.
\begin{figure}[t]
  \centering
  \begin{subfigure}{.58\textwidth}
\includegraphics[width=\linewidth]{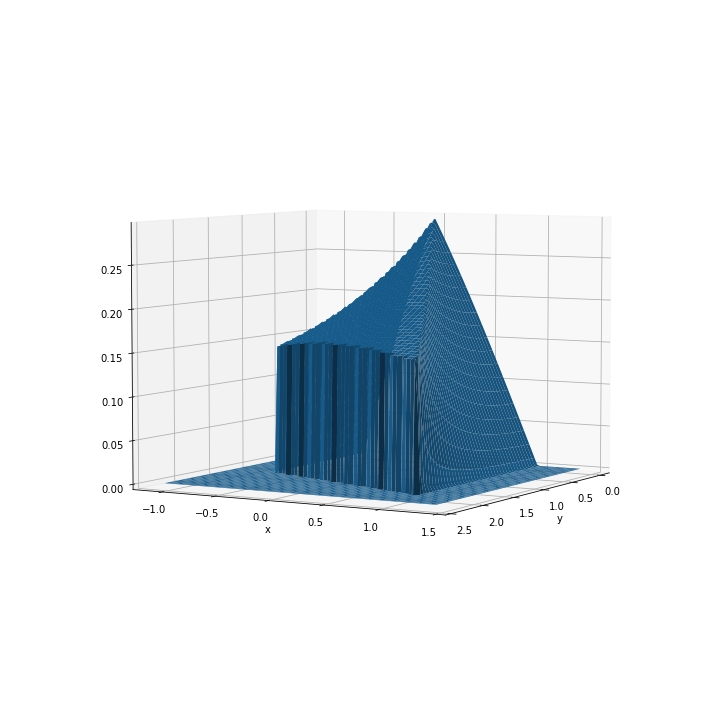}
  \end{subfigure}
  \begin{subfigure}{.4\linewidth}
    \includegraphics[width=\textwidth]{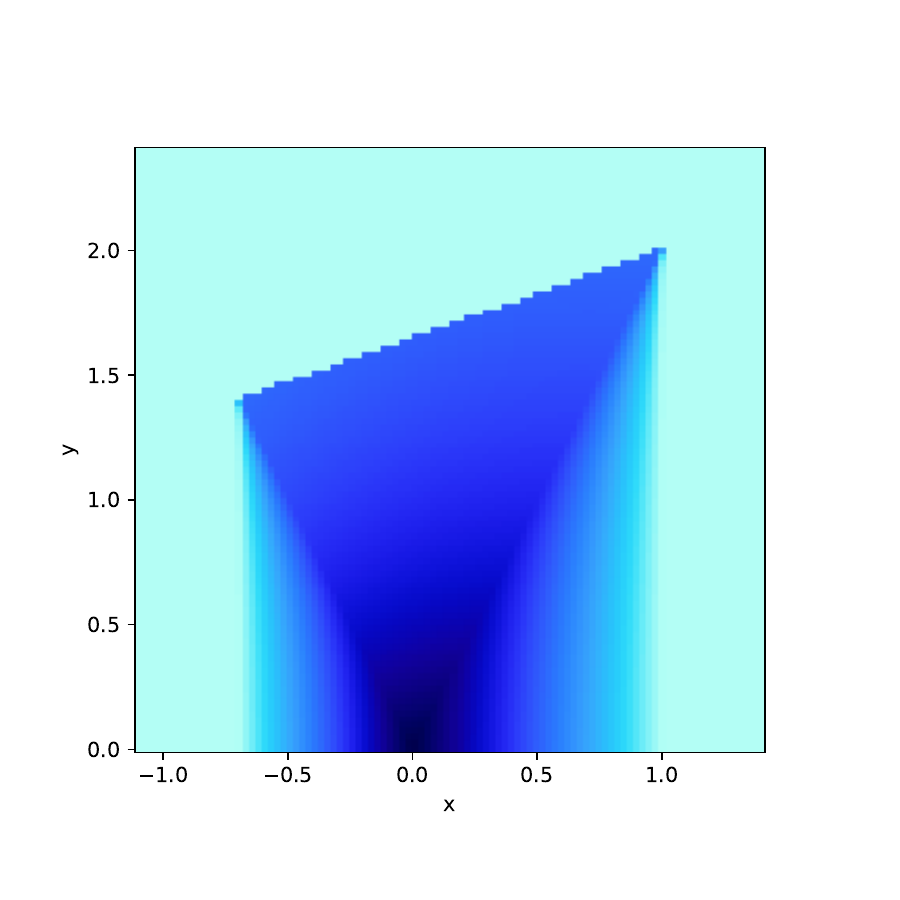}
  \end{subfigure}
  \caption{Two views of the functions described in \autoref{sec:petfunc}: here the two detectors are the line segment $[u,v]$ with $u=1+2i$, $v \sim -0.7 + 1.4i$, and the symmetric line segment $[\bar u, \bar v]$ with respect to the real axis.  
    We only plot the function on half of its domain since it is symmetric with respect to the real axis. As can be seen, the functions cannot be continuously extended up to the vertices.
  }
  \label{fig:petfuncview}
\end{figure}

\subsubsection{PET functions and Assumption~\eqref{C}.}
The angle interpretation shows that we obviously have $a_{j,k} \geq 0$ for all $j,k$, as well as the relation
\[\forall z \in \mathrm{int}(P_N), \qquad \sum_{0 \leq j < k \leq N-1} a_{j,k}(z) = 1,\]
hence PET functions satisfy the positivity assumption~\eqref{positive}  over any compact $K \subset \mathrm{int}(P_N)$.
By symmetry, we also have for any $j<k$, $j'< k'$ such that $k'-j' = k - j = r$
\[ \forall z \in \mathrm{int}(P_N), \qquad a_{j',k'}(z) = a_{j,k}(e^{-i r \theta} z),\]
meaning that all functions can be obtained from $a_{0,j}, \, j = 1, \ldots, N-1$, up to appropriate rotations, which is reminiscent of the previous toy example with functions all equal up to translation. Here, there is not $1$ but $N$ "base" functions from which all the others are deduced by rotation. Finally, we note that further (mirror) symmetries can be exploited, with respect to line segments linking a given vertex and its diametric opposite vertex when $N$ is even (or a given vertex and the middle point of the diametrically opposed detector when $N$ is odd).

From the above discussion, PET functions are in $\mathcal{C}(K)$ for any compact $K \subset \mathrm{int}(P_N)$. Furthermore, we may build a partition ensuring that they are piecewise analytic in the sense defined by Assumption~\eqref{C} in any compact $K \subset \mathrm{int}(P_N)$. 

A partition that ensures piecewise analyticity is easily built by drawing all line segments $[z_j, z_k]$ (and then intersecting with~$K$). This construction naturally isolates the four subtriangles associated to non-adjacent detectors. 

The issue of local linear independence (associated to such partitions) is more subtle. We conjecture that the PET functions are locally linearly independent, for any value of $N$ and any (reasonably large) compact $K \subset \mathrm{int}(P_N)$. To support this conjecture, let us consider the case of $N=4$ with $K\subset \mathrm{int}(P_4)$ any (reasonable) compact.
\begin{proposition}
For $N=4$, and any compact $K \subset \mathrm{int}(P_4)$ containing at least a neighbourhood of $0$, the PET functions are locally linearly independent over $K$.
\end{proposition}
\begin{proof}
There are $6$ PET functions for $N=4$, and the partition described above is obtained by considering the four quadrants intersected with $K$. By symmetry, we will be done if we prove the result for only one of the four: we focus on the first quadrant, i.e. \[\Omega_1 :=\mathrm{int}\left(K \cap \left\{z \in \mathbb{C}, \; \mathrm{Re}(z) > 0, \mathrm{Im}(z) > 0\right\}\right).\]
Over $\Omega_1$, there are $4$ active PET functions, namely $a_{0,1}$, $a_{0,2}$, $a_{0,3}$ and $a_{1,3}$. Consider a vanishing linear combination, i.e. $\lambda_{0,1}$, $\lambda_{0,2}$, $\lambda_{0,3}$ and $\lambda_{1,3}$ real numbers such that
\[\forall z \in \Omega_1, \quad \lambda_{0,1} a_{0,1}(z) + \lambda_{0,2} a_{0,2}(z) + \lambda_{0,3} a_{0,3}(z) + \lambda_{1,3} a_{1,3}(z)= 0.\]
In fact, this equality extends to the boundary of $\Omega_1$ by continuity of the functions on $K$. 

We successively evaluate the equality at $z = 0$, any $z \neq 0$ on the real axis, any $z \neq 0$ on the imaginary  axis  and any $z$ on the first diagonal.

For $z =0$, $a_{0,1}(z) = a_{0,3}(z) = 0$ and by symmetry $a_{0,2}(z) = a_{1,3}(z) \neq 0$, hence we find $\lambda_{0,2} + \lambda_{1,3} = 0$. For $z \neq 0, z \in \Omega_1$ on the real axis, $a_{0,1}(z) = 0$, and by symmetry  $a_{0,2}(z) = a_{1,3}(z) \neq 0$, hence $\lambda_{0,3} = 0$ since $a_{0,3}(z) \neq 0$. Similarly with $z \neq 0, z \in \Omega_1$ on the imaginary axis, we uncover $\lambda_{0,1} = 0$. Finally, one can check that the function $t \mapsto a_{0,2}(z(t))-a_{1,3}(z(t))$ with $z(t) = (1+i)t$, i.e., on the first diagonal, has a positive derivative at $t=0$. Hence $a_{0,2}(z)> a_{1,3}(z)$ for any  $z \in \Omega_1, z \neq 0$ on the first diagonal sufficiently close to $0$. Picking such an element $z$ leads to $\lambda_{0,2} a_{0,2}(z) + \lambda_{1,3} a_{1,3}(z) = 0$ which, together with $\lambda_{0,2} + \lambda_{1,3} = 0$, implies $\lambda_{0,2} = \lambda_{1,3}=0$  and concludes the proof.
\end{proof}

\begin{figure}[h!]
\centering{
\includegraphics[width=0.6\linewidth]{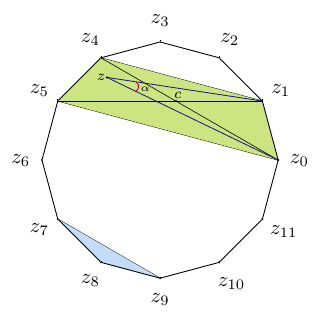} 
}
  \caption{Example of PET functions for the regular polygon~$P_N$, with $N= 12$. Outside of the pale blue region, $a_{7,8}$ vanishes. Outside of the pale green region (a trapezium whose diagonals intersect at the point denoted $c = c_{0,4}$), the function $a_{0,4}$ vanishes.
At the point~$z$, $a_{0,4}(z)$ is given by $\frac{\alpha}{\pi}$.}
  \label{Polygon}
\end{figure}

 \section{Numerical experiments}
  \label{sec5}
We here consider some simulations of algorithms solving~\eqref{opt} in different contexts, where the results exhibit singularity  as expected from the theoretical results. 
 \subsection{Singularity Certificates}
 As evidenced by our results, under the  Assumptions~\eqref{H} and  Assumption~\eqref{C}, the relevant criterion for singularity of optimisers of~\eqref{opt} is independent of the specific divergence $D$, as the question reduces to: 
 \[\text{do we have } y \in \imcone?\]
In practice, as one wants to solve~\eqref{opt} (or possibly a regularised version thereof) in the form of some iterative algorithm defined by iterates of the form
\[ \mu_{k+1} = G_k(\mu_k),\]
we are looking for methods allowing us to \emph{guarantee} that $y \notin \imcone$.
Our strategy is to devise a method to prove that $y \notin \imcone$ which writes as a function of $\mu_k$ and that gets better as $k \to +\infty$.

One approach towards this is to make use of duality: by weak duality (and recalling the definition of \(g\) in \eqref{eq:defg}) we always have
\[ \forall \mu \in \MM_+, \; \forall \lambda \in \imcone^\ast, \quad \loss(\mu) \geq g(\lambda)\]
where we recall that $\loss$ refers to the functional to be minimised $\loss(\mu) = D(y,A \mu)$ as per definition~\eqref{loss}.
Since $y \in \imcone$ if and only if $\min_{\mu \in \MM_+} \loss(\mu) = 0$, this entails the following straightforward result: 
\beq
\label{certificate}
(\exists \lambda \in \imcone^\ast, \; g(\lambda) > 0)  \quad \implies \quad y \notin \imcone.
\eeq
We will call a vector $\lambda \in \imcone^\ast$ such that $g(\lambda) > 0$ a \emph{dual certificate of singularity}.

\begin{remark}
We note that checking whether $\lambda \in \imcone^\ast  \iff A^\ast \lambda \geq 0$ for a given $\lambda \in \R^\ndet$ can easily be done at the discrete level, {i.e.}, for the discretised version of the operator $A$. However, it does not mean that $A^\ast \lambda \geq 0$ holds at the continuous level; one would need to control discretisation errors in order to ensure such an inequality. When performing numerical experiments, we did not account for them. Hence, whenever we will have claimed to have found a dual certificate, this will be abusive and will refer to discrete certificates.  
\end{remark}

This provides a natural method when it comes to establishing singularity: assume we have a convergent algorithm for solving~\eqref{opt}, in the sense that each subsequence of $(\mu_k)_{k \in \mathbb{N}}$ has subsequence that converges (in the weak-$\ast$ sense) to some minimiser $\mu^\star$ of~\eqref{opt}.

Oftentimes, one can derive an explicit link between primal and dual variables from the relation
\[\lambda^\star \in \partial D(y, \cdot)(w^\star),\]
which we know hold for optimal $w^\star$ under the hypotheses of~\autoref{sparse}. For instance, in the case of $\beta$-divergences,  an optimal dual variable $\lambda^\star \in \imcone^\ast$ is related to a primal variable $w^\star$ by $\lambda^\star := (w^\star)^{\beta - 2} (w^\star - y)$ where powers and multiplications (or divisions) are to be understood componentwise. Then, a candidate of choice for a dual certificate is given by
\[\lambda_k := (A \mu_k)^{\beta - 2} (A \mu_k - y).\] 
In particular, $\lambda_k$ will converge to $\lambda^\star$ along subsequences, hence the convergence of~$g(\lambda_k)$ to $\max_{\lambda \in \imcone^\ast} g(\lambda) = \min_{\mu \geq 0} \loss(\mu)$, the last equality being valid since strong duality obtains. Hence, if $y \notin \imcone$, we should have $\lim g(\lambda_k) > 0$ as $k \to + \infty$.


A caveat with our choice is that we should only have dual admissibility $\lambda_k \in \imcone^\ast$ at the limit $k \to +\infty$, and not for a fixed iteration number $k$. In practice, we take $k$ large and if $\lambda_k \notin \imcone^\ast$, we set $\tilde \lambda_k := \lambda_k  + c$ where $c >0$ is the smallest constant restoring  dual admissibility, estimated by bisection. In other words, we choose the minimal $c>0$ such that $A^\ast \tilde \lambda_k \geq 0$, {i.e.}, $\tilde \lambda_k \in \imcone^\ast$. Such a constant~$c$ exists because $A^\ast (\lambda_k +c) =  A^\ast \lambda_k +  c\, A^\ast 1$ and $A^\ast 1 = \sum_{i=1}^\ndet a_i>  0$ over~$K$ by assumption~\eqref{positive}.

\subsection{Emission Tomography Example}
We first look at an example from PET, where the aim is to solve~\eqref{opt}. We are investigating whether optimisers are singular depending on  the  noise level. Indeed, our conjecture that PET functions satisfy hypothesis (C) means that singular measures should be obtained, at least for a sufficient amount of noise.

When the divergence used is a $\beta$-divergence $D_\beta$  with $\beta \in [1,2]$, a common way to solve the optimisation problem~\eqref{opt} is to use the following iterates, called \emph{multiplicative}~\cite{Lee2001, Fevotte2011}. Starting from some $\mu_0 \in \MM_+$ (typically with a positive constant density over the domain), the iterates write
 \begin{equation}
 \label{multiplicative_updates}
 \mu_{k+1} = \mu_k \frac{A^\ast((A\mu_k)^{\beta - 2} y)}{A^\ast((A\mu_k)^{\beta - 1})},
 \end{equation}
assuming that one can prove $\textstyle \frac{A^\ast((A\mu_k)^{\beta - 2} y)}{A^\ast((A\mu_k)^{\beta - 1})} \in \mathcal{C}$ along iterates.

These iterates have the ML-EM algorithm ($\beta = 1$) and the Iterative Image Space Reconstruction ($\beta = 2$) as particular cases~\cite{Depierro1993}, and proofs of convergence for these algorithms with any $\beta \in [1,2]$ can be found in~\cite{Yang2011}, in the finite-dimensional case.
One advantage of these algorithms is the decrease of the functional $\loss$ along iterates, see the proof in~\cite{Fevotte2011} in the finite-dimensional setting.


We now present the results of applying the algorithm in the case of a 2D PET operator $A$ with \num{90} views and \num{64} tangential positions (hence, $\ndet = 90 \times 64 = 5760$).
We run the simulations using the Operator Discretization Library~\cite{Adler2017}, a Python wrapper around the \textsc{Astra} toolbox~\cite{astra}.

The image \(\mu\) is the Derenzo phantom, denoted $\mu_r$.
The data is obtained by (re-scaled) Poisson draws, with a time-variable (or dose-variable)~$t$ which accounts for the level of noise.
In other words, $y \sim \frac{1}{t} \mathcal P(t A \mu_r)$, and the higher~$t$, the lower the noise.
In order to approach the infinite-dimensional setting of our work, we increase the resolution to $512 \times 512$ pixels.

Finally, we take $\beta = 1.2$, on purpose not quite matching the noise statistics, as Poisson noise should lead one to take $\beta = 1$. We hence  mimic the situation of not knowing the exact noise statistics.

In \autoref{time0} and \autoref{time-1}, we display the evolution of the loss function $\loss$ along iterates, {i.e}, $k \mapsto \loss(\mu_k)$, starting from $\mu_0 = 1$, for noise levels \(t=1\) and \(t=10^{-1}\) respectively.
As expected, the function decreases. We also plot the maximum attained for each reconstruction, namely $k \mapsto \max(\mu_k)$, which tends to increase. Finally, we show the reconstruction after $k=100$ and $k=1000$ iterates. 

In the noisier case $t=1$, some pixels clearly take over as one keeps iterating.
Moreover, we can guarantee that we should indeed expect singularity, as we may provide a dual certificate proving that the data \(y\) is not in the image cone \(\imcone\).
This is also suggested by the fact that $k \mapsto \loss(\mu_k)$ seems to converge to a positive value rather than to zero.

\begin{figure}[h!]
\centering{
\begin{subfigure}{0.49\textwidth}
  \begin{tikzpicture}[scale=.7]
    \begin{axis}[
      xlabel={Iterations},
      ylabel={Loss function},
      grid=major,
      xmin=0,
      ymin=0,
      xmax=1000,
      ]
      \addplot[blue, line width=2pt] table {Figures/exp0/divs.data};
    \end{axis}
  \end{tikzpicture}
\caption{} \label{}
\end{subfigure} 
\hfill
\begin{subfigure}{0.49\textwidth}
  \begin{tikzpicture}[scale=.7]
  \begin{axis}[
    xlabel={Iterations},
    ylabel={Maximum},
    grid=major,
    xmin=0,
    ymin=0,
    xmax=1000,
    axis y line=right,
    ]
    \addplot[blue, line width=2pt] table {Figures/exp0/maxs.data};
  \end{axis}
\end{tikzpicture}
\caption{} \label{}
\end{subfigure} 
\\
\begin{subfigure}{0.49\textwidth}
\includegraphics[width=1\linewidth]{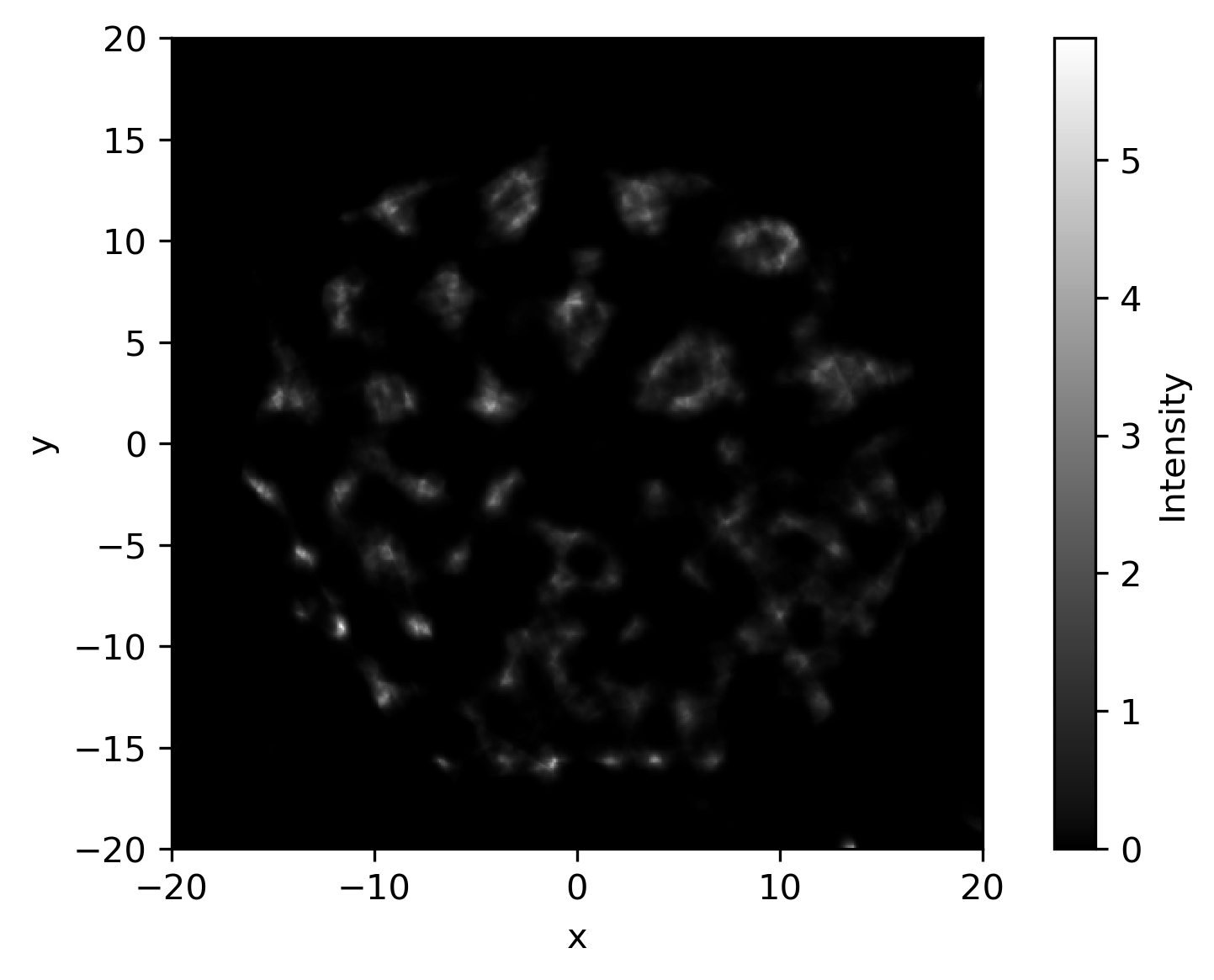} 
\caption{} \label{}
\end{subfigure} 
\hfill
\begin{subfigure}{0.49\textwidth}
\includegraphics[width=1\linewidth]{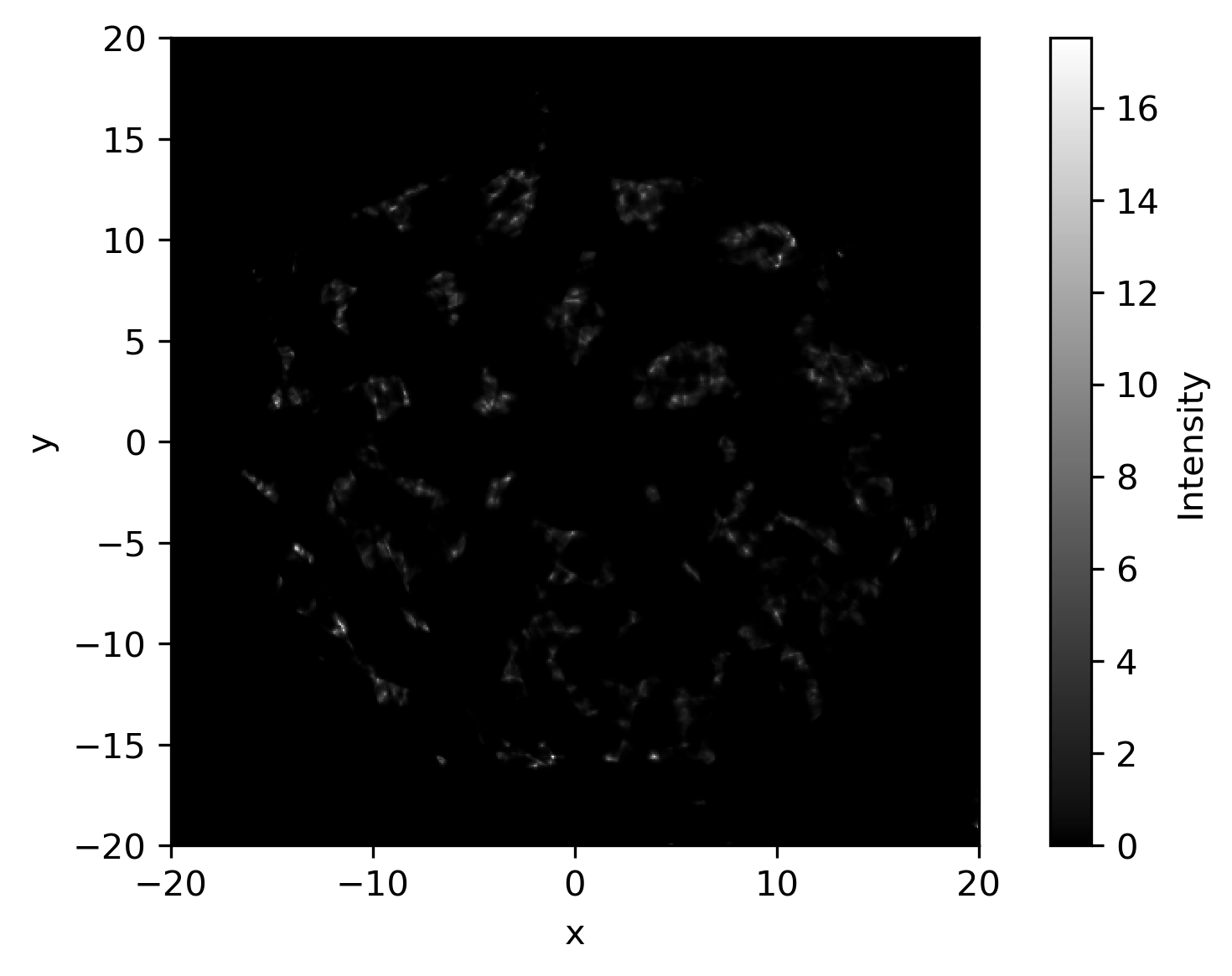} 
\caption{} \label{}
\end{subfigure} 
}
  \caption{Case $t=1$. (A) Divergence along iterates. (B) Maximum of reconstruction along iterates. (C) Reconstruction after $100$ iterates. (D) Reconstruction after $1000$ iterates.
  }
  \label{time0}
\end{figure}

In the less noisy case $t=10^{-1}$, it seems that the loss $\loss(\mu_k)$ is not converging to zero, which may be a hint that we should also expect singularity.
However, in this case we are not able to guarantee it with dual certificates.
Note that if singularity were to be true in this case as well, the reason could be that  many more iterates are needed to ascertain its presence.

In fact, we conjecture that singularity does arise at this noise level. But, as our theoretical results suggest, singularity is expected on specific parts of the domain. This is what seem to be observed for medium noise: although some few pixels take larger values along iterates, the rest of the image remains rather smooth. As a result, cropping the images to some value is a good practical solution to mitigating the night-sky effect.

\begin{figure}[h!]
\begin{subfigure}{0.49\textwidth}
  \begin{tikzpicture}[scale=.7]
    \begin{axis}[
      xlabel={Iterations},
      ylabel={Loss function},
      grid=major,
      xmin=0,
      ymin=0,
      xmax=1000,
      ]
      \addplot[blue, line width=2pt] table {Figures/exp1/divs.data};
    \end{axis}
  \end{tikzpicture}
\caption{} \label{}
\end{subfigure} 
\hfill
\begin{subfigure}{0.49\textwidth}
  \begin{tikzpicture}[scale=.7]
    \begin{axis}[
      xlabel={Iterations},
      ylabel={Maximum},
      grid=major,
      xmin=0,
      ymin=0,
      xmax=1000,
      axis y line=right,
      ]
      \addplot[blue, line width=2pt] table {Figures/exp1/maxs.data};
    \end{axis}
  \end{tikzpicture}
\caption{} \label{}
\end{subfigure} 
\\
\begin{subfigure}{0.49\textwidth}
  \includegraphics[width=1\linewidth]{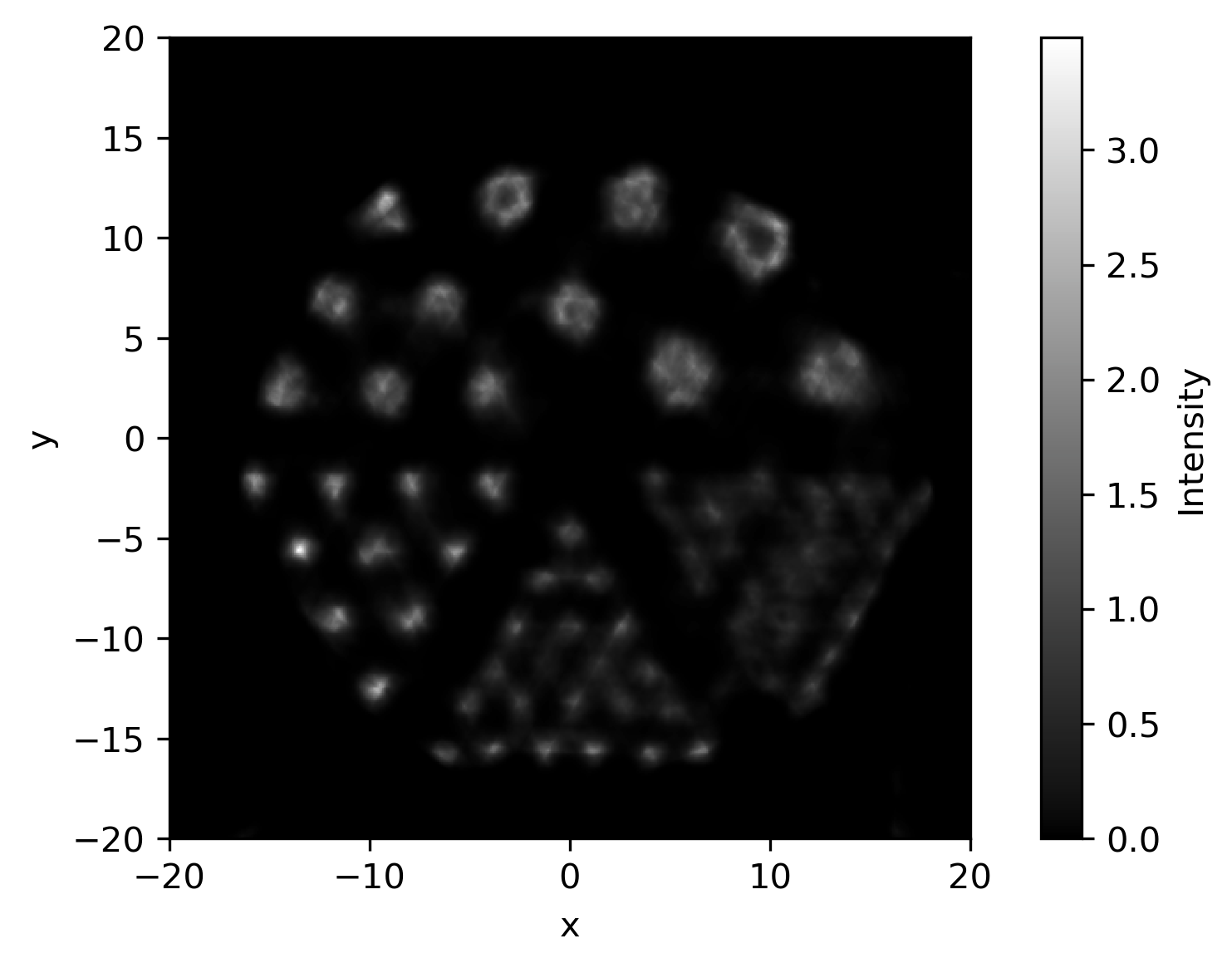} 
\caption{} \label{}
\end{subfigure} 
\hfill
\begin{subfigure}{0.49\textwidth}
  \includegraphics[width=1\linewidth]{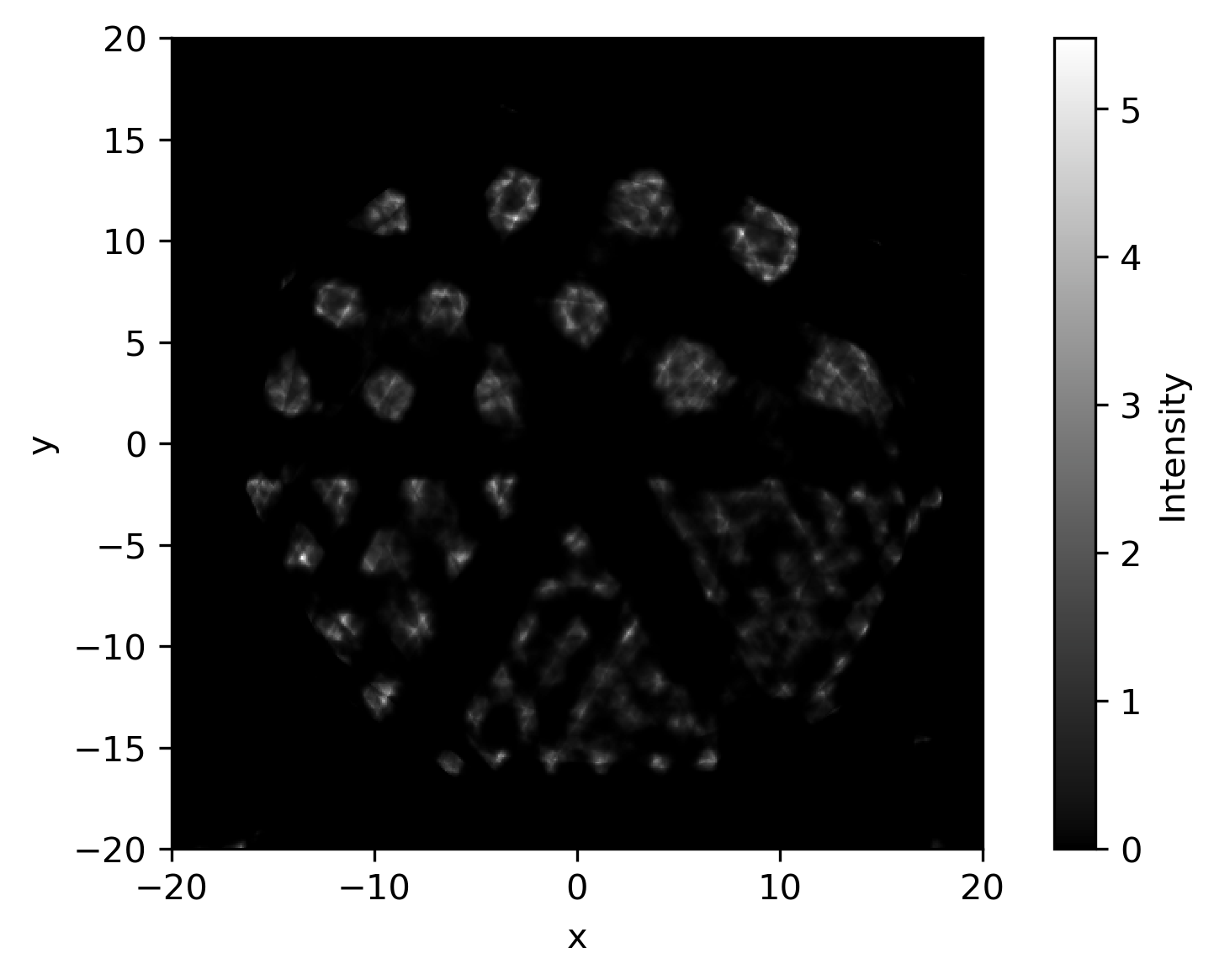} 
\caption{} \label{}
\end{subfigure} 
  \caption{Case $t=10^{-1}$. (A) Divergence along iterates. (B) Maximum of reconstruction along iterates. (C) Reconstruction after $100$ iterates. (D) Reconstruction after $1000$ iterates.}
  \label{time-1}
\end{figure}

  \subsection{Examples with \(\beta = 2\) and Regularisation}
\subsubsection{Toy Example}  
We illustrate singularity with a toy example.
There will just be two detectors \(a_1\) and \(a_2\), so \(\ndet = 2\).
We choose specifically
\[ K = [0,1], \qquad a_1 = 1, \quad a_2(x) = x.\]
With such analytic $a_i$ functions, \autoref{analytic_case} applies.

We also choose \(\beta = 2\). Recall from \autoref{sec:betdiv} that the divergence reduces to the Euclidean distance in this case.
We can now compute the singular solutions explicitly depending on the parameter \(y \in \R^{2}\), as shown in \autoref{fig:regions}. For $y \in \R_+^2$ outside of the cone \[\imcone =\{(y_1, y_2) \in \R^2, \, 0 \leq y_1 \leq y_2\},\] the solution is of the form $\xi \delta_1$ with $\xi \geq 0$ a varying parameter.  For completeness, we also display the solution for $y \notin \R_+^2$ (which is either $0$ or $\xi \delta_0$ with $\xi \geq 0$ a varying parameter).

We also look at the effect of regularisation. 
In this case, following standard practice in many image reconstruction problems, we use total variation regularisation, that is, we solve 
\[ \min_{\mu \geq 0}  \quad \loss(\mu) +\rho \operatorname{TV}(\mu),\] where \(\operatorname{TV}(\mu)\) is the total variation of the derivative of the measure \(\mu\) and \(\rho>0\) is a regularisation parameter.
In this discretised, one-dimensional setting, this is simply \(\operatorname{TV}(\mu) = \sum_i \abs{\mu^{i+1} - \mu^{i}}\), where \(\mu^i\) is the value of the discretised measure at pixel~\(i\).
We then compute the minimum using a primal-dual hybrid gradient method~\cite{ChaPo10}.
We plot the resulting minima for various values of the regularisation parameter \(\rho\) in~\autoref{fig:toyexperiment}, in the case $y = (0,1)$. As $\rho$ goes to $0$, the solution approaches the expected singular limit $\textstyle \frac{1}{2} \delta_1$.

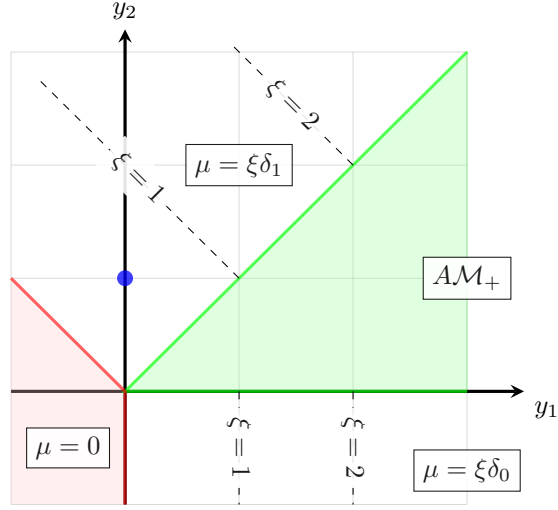
\begin{figure}
  \centering

  \begin{tikzpicture}[
    scale=1.5,
    vector/.style={draw,thick,->,},
    boundary/.style={draw,very thick,opacity=.6},
    zerob/.style={red},
    coneb/.style={green},
    caxis/.style={draw,->,very thick},
    isoval/.style={draw,thin,dashed},
    isolab/.style={sloped, fill=white,opacity=.9},
    reglab/.style={fill=white,opacity=.9,draw=black},
    ]
      \coordinate (O) at (0,0);
      \coordinate (A) at (2,0);
      \coordinate (M) at (2,2);
      \coordinate (P) at (-1,1);
      \coordinate (Q) at (0,-1);
      \coordinate (C) at (-1,-1);

      \draw[step=1cm,very thin, gray,opacity=.3] (C) grid(3,3);

      \draw[caxis] (-1,0) --  (3.5,0) node[below right]{$y_1$};
      \draw[caxis] (0,-1) -- (0,3.2) node[above]{$y_2$};

      \path[fill=green!40!white,draw=green,opacity=.3] (O) -- (3,0) -- (3,3) -- cycle;
      \path[fill=red!20!white,opacity=.3] (O) -- (P) -- (C) -- (Q) -- cycle;
      \path[boundary,coneb] (3,0) -- (O) -- (3,3);
      
      \path[boundary,zerob] (P) -- (O) -- (Q);

      \path[isoval] (1,1) -- node[isolab]{\(\xi=1\)} +(135:2.5);
      \path[isoval] (2,2) -- node[isolab]{\(\xi=2\)} +(135:1.5);

      \path[isoval] (0:1) -- node[isolab]{\(\xi=1\)} +(-90:1);
      \path[isoval] (0:2) -- node[isolab]{\(\xi=2\)} +(-90:1);

      \node[reglab] at (3,-.7) {\(\mu=\xi \delta_0\)};
      \node[reglab] at (1,2) {\(\mu=\xi \delta_1\)};
      \node[reglab] at (-0.5,-.5) {\(\mu=0\)};
      \node[reglab] at (3,1) {\(A\mathcal{M}_+\)};

      \fill[blue,opacity=.75]  (0,1) circle (2pt);

    \end{tikzpicture}
    \caption{
      The singular solutions of the problem \eqref{opt} with two detectors, \(a_1 = 1\), \(a_2(x)=x\) on the interval \(K=[0,1]\).
      There are three distinct regions outside the cone \(A \MM_{+}\), but only one which intersects the first quadrant $y \in \R_+^2$, where the optimal solution is a Dirac, given by \(\xi\delta_{1}\) for some $\xi \geq 0$.
    }
 \label{fig:regions} 
\end{figure}

\begin{figure}
  \centering
  \includegraphics[width=.8\textwidth]{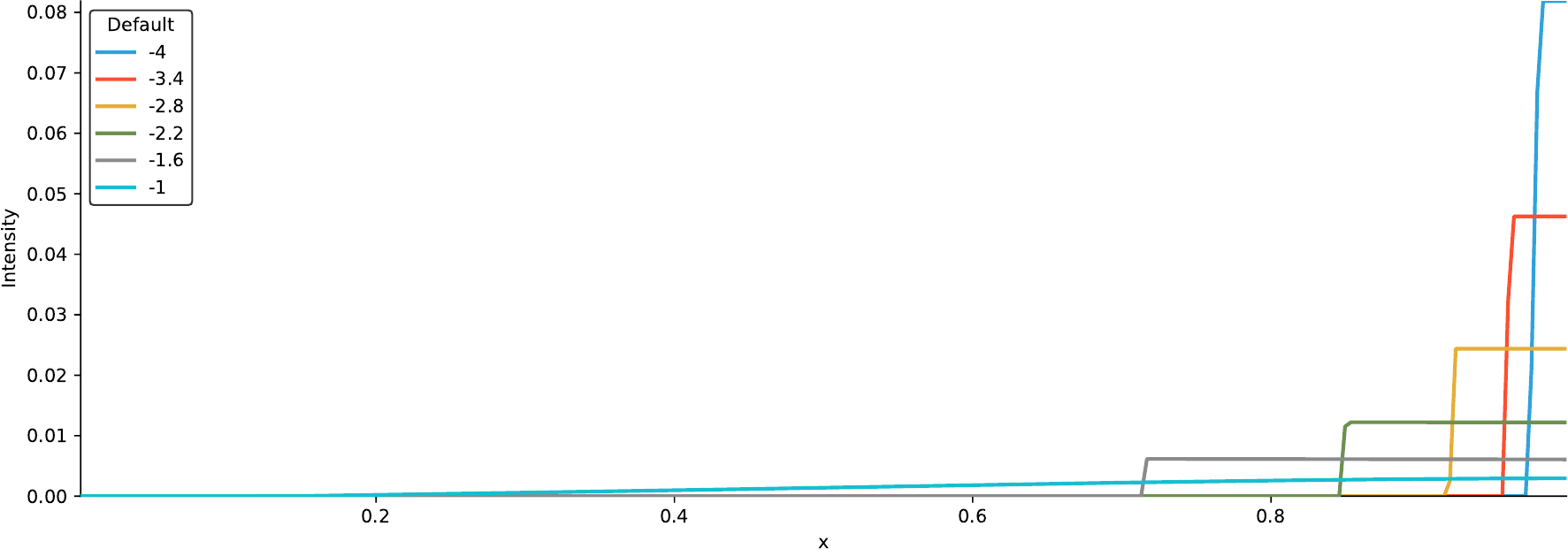}
  \caption{
    The solutions for $y = (0,1)$ of the problem in \autoref{fig:regions} with total variation regularisation.
    Each curve corresponds to a different regularisation parameter, labelled by its base-10 logarithm.
    We see that when the regularisation parameter goes to zero, the computed solution converges to the expected exact solution, which,
    we see from \autoref{fig:regions},
    is \(\mu = .5 \delta_1\), depicted by a blue circle.
  }
  \label{fig:toyexperiment}
\end{figure}

\subsubsection{Tomography example}

We finally look at a more realistic 2D example taken from tomography, where the unknown $\mu$ equals the usual Shepp-Logan phantom used as a benchmark in CT tomography. The example of~\autoref{fig:rhomax} features  an image resolution of \(127 \times 127\), and there are \num{285} angles and \num{183} tangential coordinates.

We consider the case of the Euclidean distance $d_2$. The data $y$ is obtained by Gaussian draws with negative values clipped to $0$, {i.e.}, $y \sim \max(\mathcal{N}(A\mu, \sigma^2),0)$, which ensures $y\in \R_+^\ndet$.  We then solve the corresponding TV-regularised problem
\[ \min_{\mu \geq 0}  \quad D_2(y, A \mu) +\rho \operatorname{TV}(\mu),\]
where $\rho>0$, by a primal-dual hybrid gradient method. Here, $\operatorname{TV}$ refers to isotropic~$\operatorname{TV}$.

As~\autoref{fig:rhomax} shows, the maximum of the reconstruction gets bigger as $\rho$ tends to~$0$. In fact, the reconstruction for small $\rho$ clearly exhibits the night-sky effect. The fact that $\rho = 0$ does not lead to higher maximum values is due to the resolution, which acts as a regulariser for the expected singular measures.

\subsection*{Acknowledgments.}
The authors are indebted to an anonymous referee whose remarks greatly improved the first versions of this paper.

\begin{figure}
  \centering
  \begin{subfigure}{.7\textwidth}
    \begin{tikzpicture}
      \begin{axis}[
        xlabel={Regularisation parameter \(\rho\)},
        ylabel={Maximum over the reconstruction},
        grid=major,
        xmin=0,
        xmax=6,
        ymin=0,
        ]
        \addplot[blue, mark=*, line width=2pt, mark size=1pt] table {Figures/regmaxdata.dat};
      \end{axis}
    \end{tikzpicture}
    \caption{The maximum of the reconstruction for various regularisation parameters \(\rho\)}
  \end{subfigure} \\
    \begin{subfigure}{.45\textwidth}
    \includegraphics[width=\linewidth]{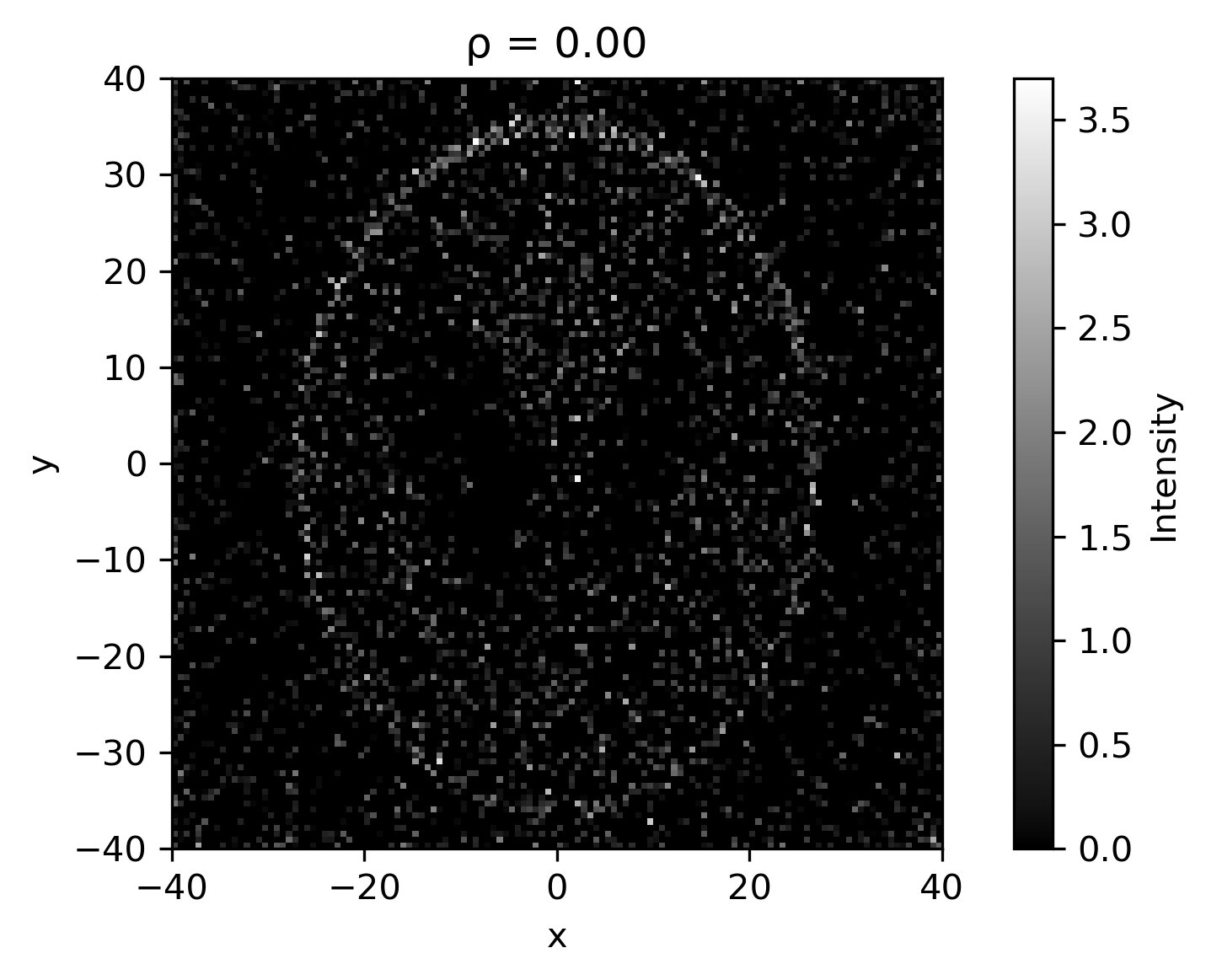}
    \caption{Reconstruction with no regularisation (\(\rho=0\))}
  \end{subfigure} 
  \begin{subfigure}{.45\textwidth}
    \includegraphics[width=\linewidth]{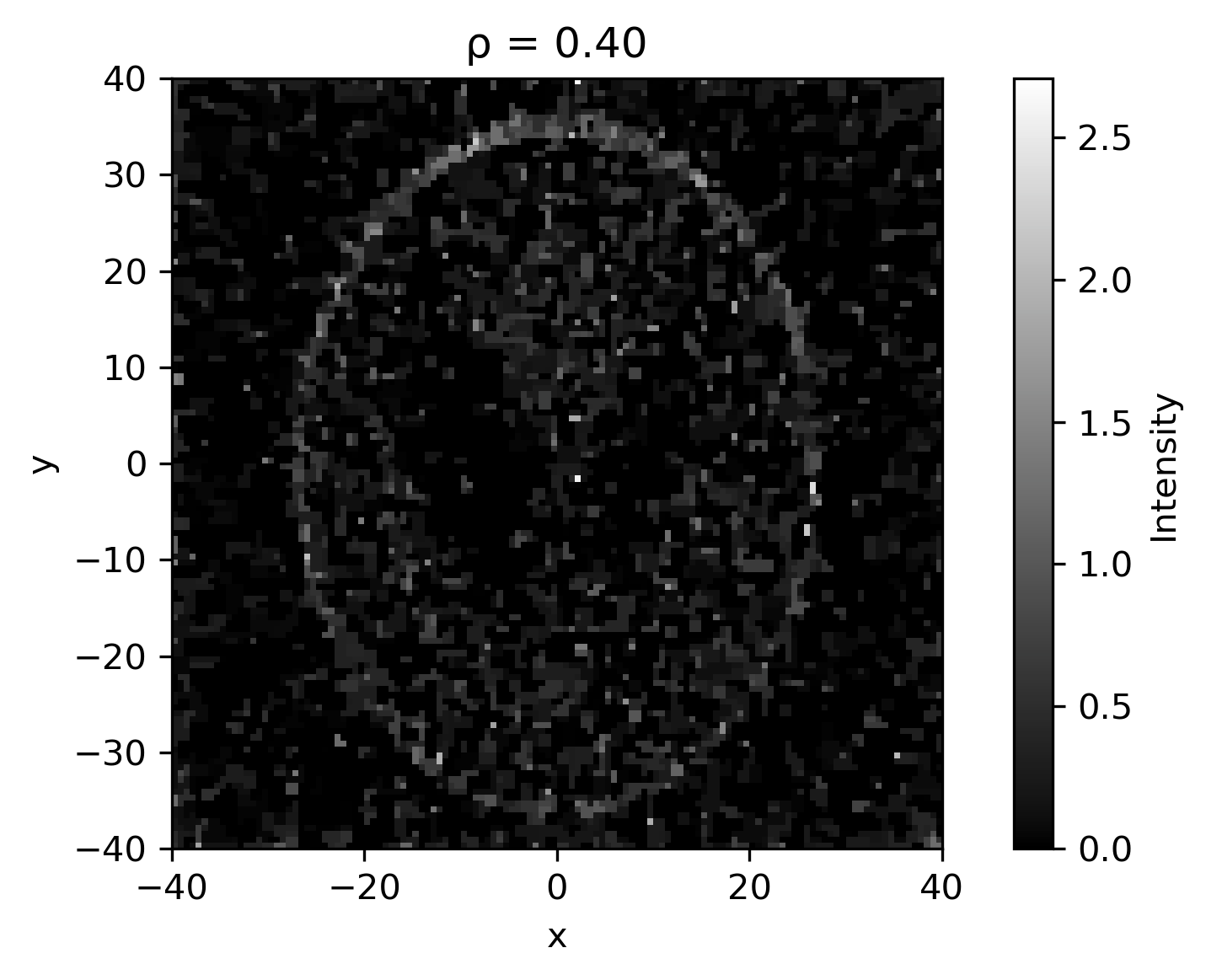}
    \caption{Reconstruction with small regularisation (\(\rho=0.4\))}
  \end{subfigure} \\
  \begin{subfigure}{.45\textwidth}
    \includegraphics[width=\textwidth]{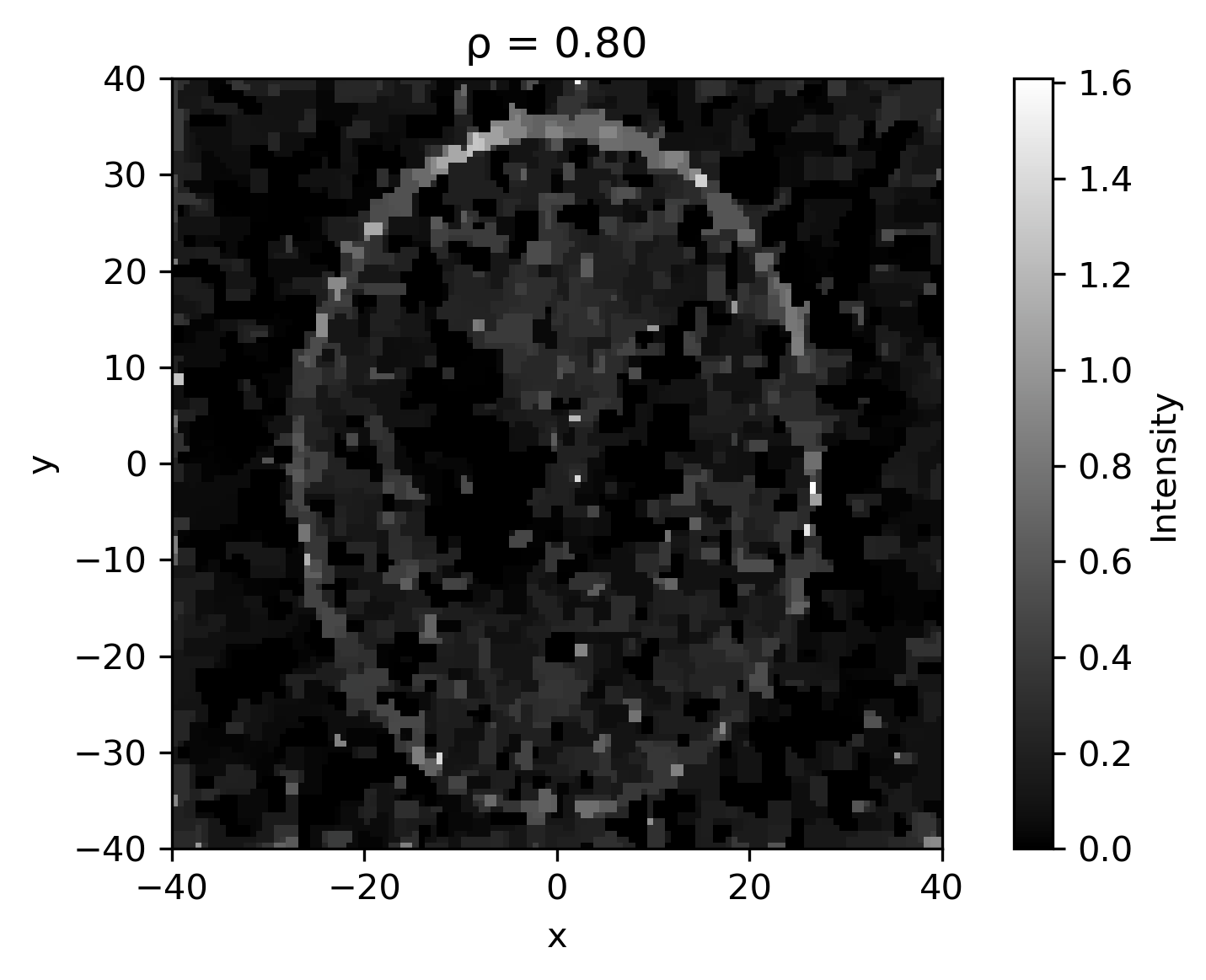}
    \caption{Reconstruction with medium regularisation (\(\rho=0.8\))}
  \end{subfigure}
  \begin{subfigure}{.45\textwidth}
    \includegraphics[width=\textwidth]{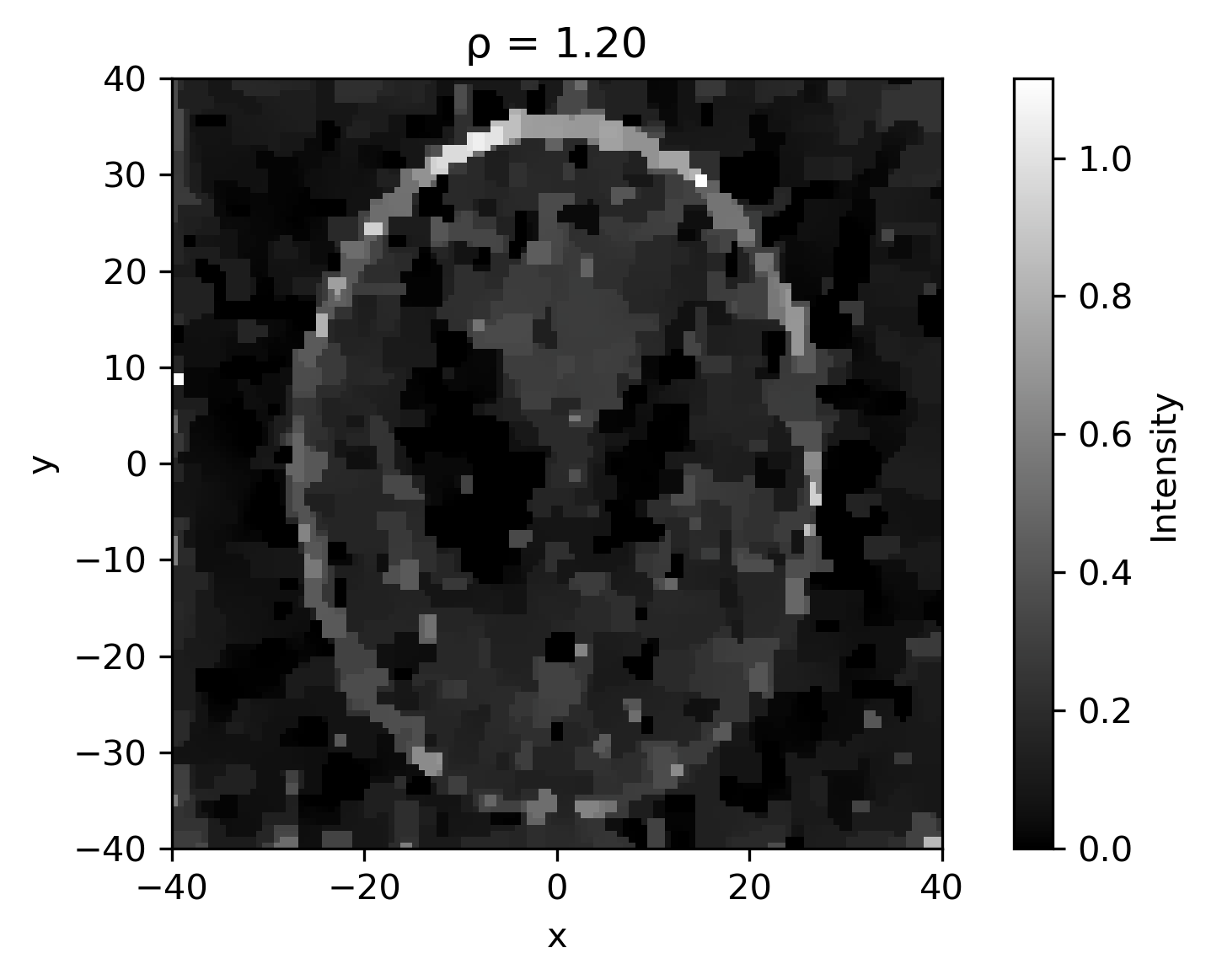}
    \caption{Reconstruction with high regularisation (\(\rho=1.2\))}
  \end{subfigure}
  \caption{Relation between regularisation parameter and the reconstruction of the Shepp--Logan phantom $\mu$. Here, the data is given by $y\sim\max(\mathcal{N}(A\mu, \sigma^2),0)$, with $\sigma = 10$.}
  \label{fig:rhomax}
\end{figure}

\bibliography{biblio.bib}

\begin{thebibliography}{10}

\bibitem{Adler2017}
{\sc Adler, J., Kohr, H., and {\"O}ktem, O.}
\newblock O{D}{L}-a {P}ython framework for rapid prototyping in inverse
  problems.
\newblock {\em Royal Institute of Technology\/} (2017).

\bibitem{Boyer2019}
{\sc Boyer, C., Chambolle, A., Castro, Y.~D., Duval, V., De~Gournay, F., and
  Weiss, P.}
\newblock On representer theorems and convex regularization.
\newblock {\em SIAM Journal on Optimization 29}, 2 (2019), 1260--1281.

\bibitem{Bredies2020}
{\sc Bredies, K., and Carioni, M.}
\newblock Sparsity of solutions for variational inverse problems with
  finite-dimensional data.
\newblock {\em Calculus of Variations and Partial Differential Equations 59}, 1
  (2020), 1--26.

\bibitem{Brezis2010}
{\sc Brezis, H.}
\newblock {\em Functional analysis, Sobolev spaces and partial differential
  equations}.
\newblock Springer Science \& Business Media, 2010.

\bibitem{Cavalcanti2019}
{\sc Cavalcanti, Y.~C., Oberlin, T., Dobigeon, N., F{\'e}votte, C., Stute, S.,
  Ribeiro, M.-J., and Tauber, C.}
\newblock Factor analysis of dynamic {P}{E}{T} images: beyond {G}aussian noise.
\newblock {\em IEEE transactions on medical imaging\/} (2019).

\bibitem{ChaPo10}
{\sc Chambolle, A., and Pock, T.}
\newblock A first-order primal-dual algorithm for convex problems with
  applications to imaging.
\newblock {\em Journal of Mathematical Imaging and Vision 40}, 1 (2010),
  120--145.

\bibitem{Cichocki2010}
{\sc Cichocki, A., and Amari, S.-i.}
\newblock Families of alpha-beta-and gamma-divergences: Flexible and robust
  measures of similarities.
\newblock {\em Entropy 12}, 6 (2010), 1532--1568.

\bibitem{Clason2019}
{\sc Clason, C., Kaltenbacher, B., and Resmerita, E.}
\newblock Regularization of ill-posed problems with non-negative solutions.
\newblock In {\em Splitting Algorithms, Modern Operator Theory, and
  Applications}. Springer, 2019, pp.~113--135.

\bibitem{Clason2017}
{\sc Clason, C., and Schiela, A.}
\newblock Optimal control of elliptic equations with positive measures.
\newblock {\em ESAIM: Control, Optimisation and Calculus of Variations 23}, 1
  (2017), 217--240.

\bibitem{Depierro1993}
{\sc De~Pierro, A.~R.}
\newblock On the relation between the {I}{S}{R}{A} and the {E}{M} algorithm for
  positron emission tomography.
\newblock {\em IEEE transactions on Medical Imaging 12}, 2 (1993), 328--333.

\bibitem{Debarre2022}
{\sc Debarre, T., Denoyelle, Q., and Fageot, J.}
\newblock On the uniqueness of solutions for the basis pursuit in the
  continuum.
\newblock {\em ArXiv preprint arXiv:2009.11855\/} (2020).

\bibitem{Denoyelle2017}
{\sc Denoyelle, Q., Duval, V., and Peyr{\'e}, G.}
\newblock Support recovery for sparse super-resolution of positive measures.
\newblock {\em Journal of Fourier Analysis and Applications 23}, 5 (2017),
  1153--1194.

\bibitem{Fevotte2011}
{\sc F{\'e}votte, C., and Idier, J.}
\newblock Algorithms for nonnegative matrix factorization with the
  $\beta$-divergence.
\newblock {\em Neural computation 23}, 9 (2011), 2421--2456.

\bibitem{Georgiou2005}
{\sc Georgiou, T.~T.}
\newblock Solution of the general moment problem via a one-parameter imbedding.
\newblock {\em IEEE transactions on automatic control 50}, 6 (2005), 811--826.

\bibitem{Henrot2012}
{\sc Henrot, S., Soussen, C., and Brie, D.}
\newblock Fast positive deconvolution of hyperspectral images.
\newblock {\em IEEE Transactions on Image Processing 22}, 2 (2012), 828--833.

\bibitem{Krantz2002}
{\sc Krantz, S.~G., and Parks, H.~R.}
\newblock {\em A primer of real analytic functions}.
\newblock Springer Science \& Business Media, 2002.

\bibitem{Lee2001}
{\sc Lee, D.~D., and Seung, H.~S.}
\newblock Algorithms for non-negative matrix factorization.
\newblock In {\em Advances in neural information processing systems\/} (2001),
  pp.~556--562.

\bibitem{Loheac2017}
{\sc Loh{\'e}ac, J., Tr{\'e}lat, E., and Zuazua, E.}
\newblock Minimal controllability time for the heat equation under unilateral
  state or control constraints.
\newblock {\em Mathematical Models and Methods in Applied Sciences 27}, 09
  (2017), 1587--1644.

\bibitem{Mair1996}
{\sc Mair, B., Rao, M., and Anderson, J.}
\newblock Positron emission tomography, {B}orel measures and weak convergence.
\newblock {\em Inverse Problems 12}, 6 (1996), 965.

\bibitem{Oktem2019}
{\sc {\"O}ktem, O., Pouchol, C., and Verdier, O.}
\newblock Spatiotemporal {P}{E}{T} reconstruction using {M}{L}-{E}{M} with
  learned diffeomorphic deformation.
\newblock In {\em International Workshop on Machine Learning for Medical Image
  Reconstruction\/} (2019), Springer, pp.~151--162.

\bibitem{astra}
{\sc Palenstijn, W.~J.}
\newblock Astra toolbox, Jan. 2012.

\bibitem{Verdier2020}
{\sc Pouchol, C., and Verdier, O.}
\newblock The {M}{L}-{E}{M} algorithm in continuum: sparse measure solutions.
\newblock {\em Inverse Problems 36}, 3 (2020).

\bibitem{Verdier2021}
{\sc Pouchol, C., and Verdier, O.}
\newblock Statistical model and {M}{L}-{E}{M} algorithm for emission tomography
  with known movement.
\newblock {\em Journal of Mathematical Imaging and Vision\/} (2021), 1--14.

\bibitem{Resmerita2007}
{\sc Resmerita, E., Engl, H.~W., and Iusem, A.~N.}
\newblock The expectation-maximization algorithm for ill-posed integral
  equations: a convergence analysis.
\newblock {\em Inverse Problems 23}, 6 (2007), 2575.

\bibitem{Rockafellar1966}
{\sc Rockafellar, R.~T.}
\newblock Extension of {F}enchel's duality theorem for convex functions.
\newblock {\em Duke Mathematical Journal 33\/} (1966), 81--89.

\bibitem{Shepp1982}
{\sc Shepp, L.~A., and Vardi, Y.}
\newblock Maximum likelihood reconstruction for emission tomography.
\newblock {\em IEEE transactions on medical imaging 1}, 2 (1982), 113--122.

\bibitem{Simsekli2013}
{\sc Simsekli, U., Cemgil, A.~T., and Yilmaz, Y.~K.}
\newblock Learning the beta-divergence in tweedie compound poisson matrix
  factorization models.
\newblock In {\em International Conference on Machine Learning\/} (2013),
  pp.~1409--1417.

\bibitem{Vardi1985}
{\sc Vardi, Y., Shepp, L., and Kaufman, L.}
\newblock A statistical model for positron emission tomography.
\newblock {\em Journal of the American statistical Association 80}, 389 (1985),
  8--20.

\bibitem{Yang2011}
{\sc Yang, Z., and Oja, E.}
\newblock Unified development of multiplicative algorithms for linear and
  quadratic nonnegative matrix factorization.
\newblock {\em IEEE transactions on neural networks 22}, 12 (2011), 1878--1891.

\end{thebibliography}
\bibliographystyle{acm}

\appendix
\section{Computation of the dual}
\label{appA}
Let us denote \[\psi_y (w) := d_\beta(y,w)- \lambda w.\]

The goal is to compute the function $h$ defined by~\eqref{eq:defh}.

 \subsection{Case $ \beta = 1$.}
We have $\psi_y(w) \sim (1- \lambda) w$ as $w \to +\infty$, hence $h(y, \lambda) = - \infty$ if $\lambda >1$.
If $\lambda =  1$ and $y >0$, $\psi_y(w) \sim -y \ln(w)$ as $w \to 0$: the function tends to $-\infty$ as $w \to 0$ and $h(y, \lambda) = - \infty$. If $y = 0$, the function equals $0$ identically and its minimum is $0$.

We now focus on the case $\lambda < 1$. 
We still have $\psi_y(w) \sim (1- \lambda) w$ as $w \rightarrow +\infty$. 
If $y>0$, $\psi_y(w) \sim - y \ln(w)$ as $w \rightarrow 0$, thus the function tends to $+\infty$ at both ends.
Since $\psi_y$ is strictly convex in this case, it has a unique minimum for $w>0$, which we again denote $w(y, \lambda)$, solving
\[w(y, \lambda)^{-1}(w(y, \lambda) - y) = \lambda \iff w(y, \lambda) = \frac{y}{1- \lambda}. \]
If $y =0$, the function $\psi_y(w) = w - \lambda w$ is minimised at $w = 0$, with value $0$.


We may also gather the cases $y = 0$ and  $y>0$ whenever $\lambda \leq 1$, since the formula  for $w(y, \lambda)$ shows that it vanishes with $y$.

Summing up, we find 
\[
  h(y, \lambda) = \begin{cases}
- \infty & \text{ if  \(\lambda > 1\)}
\\
d_1(y, w(y, \lambda)) - \lambda w(y, \lambda) & \text{ if \(\lambda \leq 1\)}
\end{cases}
\]
 In the last case, further computations lead to 
 \[d_1(y,w(y, \lambda)) - \lambda w(y, \lambda) = y \ln(1-\lambda).\]

 \subsection{Case $ 1 < \beta < 2$.}
If $y>0$, $\psi_y(w) \sim \frac{w^\beta}{\beta}$ as $w \rightarrow +\infty$, and $\psi_y(w) \sim - \frac{y}{\beta-1} w^{\beta - 1}$ as $w \rightarrow 0$. The derivative $\psi'_y$ satisfies  $\psi'_y(w) \sim -y w^{\beta - 2}$ as $w \rightarrow 0$, and $\psi'_y(w) \sim w^{\beta - 1}$ as $w \rightarrow +\infty$. Thus the function $\psi'$ increases (as $\psi$ is convex) from $-\infty$ to $+\infty$. As a consequence, it has a unique minimum $w>0$, which we again denote $w(y, \lambda)$, solving
\[w(y, \lambda)^{\beta - 2}(w(y, \lambda) - y) = \lambda.\]
If $y =0$ and $\lambda \leq 0$, it is easily seen that the function $\psi_y(w) = \frac{1}{\beta}w^\beta - \lambda w$ is minimised at $w = 0$, with value $0$, whereas if $\lambda > 0$, it has a unique minimum (also defined by the equation for $w(y,\lambda)$). 
Summing up, we find 
\[
  h(y, \lambda) = \begin{cases}
0 & \text{ if \(y = 0, \lambda \leq 0\)}
\\
d_\beta(y,w(y, \lambda)) - \lambda w(y, \lambda) & \text{otherwise}. 
\end{cases}
\]

  \subsection{Case $\beta = 2$.}
In this case, $\psi_y$ has a unique minimum $w$ given by $w = \lambda + y$, which gives the explicit formula
 \[ h(y, \lambda) = - \frac{1}{2}(\lambda  +y)^2 + \frac{1}{2} y^2.\]

\end{document}